\DeclareMathOperator{\arcsinh}{arcsinh}
\begin{document}
\newtheorem{thm}{Theorem}[section]
\newtheorem*{thm*}{Theorem}
\newtheorem{lem}[thm]{Lemma}
\newtheorem{prop}[thm]{Proposition}
\newtheorem{cor}[thm]{Corollary}
\newtheorem*{conj}{Conjecture}
\newtheorem{proj}[thm]{Project}
\newtheorem{question}[thm]{Question}
\newtheorem{rem}{Remark}[section]

\theoremstyle{definition}
\newtheorem*{defn}{Definition}
\newtheorem*{remark}{Remark}
\newtheorem{exercise}{Exercise}
\newtheorem*{exercise*}{Exercise}

\numberwithin{equation}{section}

\newcommand{\rad}{\operatorname{rad}}

\newcommand{\Z}{{\mathbb Z}} 
\newcommand{\Q}{{\mathbb Q}}
\newcommand{\R}{{\mathbb R}}
\newcommand{\C}{{\mathbb C}}
\newcommand{\N}{{\mathbb N}}
\newcommand{\FF}{{\mathbb F}}
\newcommand{\fq}{\mathbb{F}_q}
\newcommand{\rmk}[1]{\footnote{{\bf Comment:} #1}}

\renewcommand{\mod}{\;\operatorname{mod}}
\newcommand{\ord}{\operatorname{ord}}
\newcommand{\TT}{\mathbb{T}}
\renewcommand{\i}{{\mathrm{i}}}
\renewcommand{\d}{{\mathrm{d}}}
\renewcommand{\^}{\widehat}
\newcommand{\HH}{\mathbb H}
\newcommand{\Vol}{\operatorname{vol}}
\newcommand{\area}{\operatorname{area}}
\newcommand{\tr}{\operatorname{tr}}
\newcommand{\norm}{\mathcal N} 
\newcommand{\intinf}{\int_{-\infty}^\infty}
\newcommand{\ave}[1]{\left\langle#1\right\rangle} 
\newcommand{\Var}{\operatorname{Var}}
\newcommand{\Prob}{\operatorname{Prob}}
\newcommand{\sym}{\operatorname{Sym}}
\newcommand{\disc}{\operatorname{disc}}
\newcommand{\CA}{{\mathcal C}_A}
\newcommand{\cond}{\operatorname{cond}} 
\newcommand{\lcm}{\operatorname{lcm}}
\newcommand{\Kl}{\operatorname{Kl}} 
\newcommand{\leg}[2]{\left( \frac{#1}{#2} \right)}  
\newcommand{\Li}{\operatorname{Li}}

\newcommand{\sumstar}{\sideset \and^{*} \to \sum}

\newcommand{\LL}{\mathcal L} 
\newcommand{\sumf}{\sum^\flat}
\newcommand{\Hgev}{\mathcal H_{2g+2,q}}
\newcommand{\USp}{\operatorname{USp}}
\newcommand{\conv}{*}
\newcommand{\dist} {\operatorname{dist}}
\newcommand{\CF}{c_0} 
\newcommand{\kerp}{\mathcal K}

\newcommand{\Cov}{\operatorname{cov}}
\newcommand{\Sym}{\operatorname{Sym}}

\newcommand{\Ht}{\operatorname{Ht}}

\newcommand{\E}{\operatorname{\mathbb E}} 
\newcommand{\sign}{\operatorname{sign}} 
\newcommand{\meas}{\operatorname{meas}} 
\newcommand{\length}{\operatorname{length}} 

\newcommand{\divid}{d} 

\newcommand{\GL}{\operatorname{GL}}
\newcommand{\SL}{\operatorname{SL}}
\newcommand{\re}{\operatorname{Re}}
\newcommand{\im}{\operatorname{Im}}
\newcommand{\res}{\operatorname{Res}}
 \newcommand{\eigen}{\Lambda} 
\newcommand{\tens}{\mathbf t} 
\newcommand{\diam}{\operatorname{diam}}
\newcommand{\fixme}[1]{\footnote{Fixme: #1}}
 \newcommand{\EWp}{\mathbb E^{\rm WP}_g} 
\newcommand{\orb}{\operatorname{Orb}}
\newcommand{\supp}{\operatorname{Supp}}
\newcommand{\mmfactor }{\textcolor{red}{c_{\rm Mir}}}
\newcommand{\Mg}{\mathcal M_g} 
\newcommand{\MCG}{\operatorname{Mod}} 
\newcommand{\Diff}{\operatorname{Diff}} 
\newcommand{\If}{I_f(L,\tau)}

\title[GOE statistics on moduli space of surfaces]
{GOE statistics on the moduli space of surfaces of large genus}
\author{Ze\'ev Rudnick  }
\address{School of Mathematical Sciences, Tel Aviv University, Tel Aviv 69978, Israel} 
\email{rudnick@tauex.tau.ac.il}
\date{\today}
\begin{abstract}
 For a compact hyperbolic surface, we define a smooth linear statistic, mimicking the number of Laplace eigenvalues in a short energy window.  
 We study the variance of this statistic, when averaged over the moduli space $\Mg$ of all genus $g$ surfaces  with respect to 
 the Weil-Petersson measure. We show that in the double limit, first taking the large genus limit and then the short window limit, we recover GOE statistics for the variance.  The proof makes essential use of Mirzakhani's integration formula.
\end{abstract}
\keywords{Moduli space, Riemann surface, Selberg trace formula, Gaussian Orthogonal Ensemble, Random Matrix Theory, Laplacian, quantum chaos, Mirzakhani's integration formula.}
\maketitle

\section{Introduction}

 \subsection{Motivation}
An outstanding conjecture in quantum chaos is that the statistics of the energy levels of ``generic'' chaotic systems with time reversal symmetry  are described by those of  the Gaussian Orthogonal Ensemble (GOE) in Random Matrix Theory \cite{BGS}. 
This conjecture  seems to be extremely difficult, with no single case being proved. It has long been desired to improve the situation by averaging over  a suitable ensemble of chaotic systems, see e.g. the discussion in \cite{NZ}, and \cite{AS} for a numerical study, averaging over $30$ hyperbolic surfaces of genus $2$.  
So far this has not been successfully implemented, in part because of the lack of mechanisms to execute averaging. 
In this paper we carry out a version of such  ensemble averaging 
on the moduli space $\Mg$ of compact hyperbolic surfaces,  equipped with the Weil-Petersson measure, using the pioneering work of  Mirzakhani.  

With this ensemble averaging, we examine the rigidity of the eigenvalue spectrum $\{\lambda_j\}_{j=0}^\infty$ of hyperbolic surfaces. The term ``rigidity'' refers to slow growth of the variance of the    number  of eigenvalues in an energy window $ [E, E+W]$,  
with $E,W\to \infty$, $W=o(\sqrt{E})$, or as in this paper,  of smooth linear statistics   mimicking the count of eigenvalues in   windows.
  The reason that we choose the eigenvalue window of this form is that in this regime,  
Berry \cite{Berry1985, Berry1986} argued that the fluctuations   are universally those of the GOE, 
    but cease to be universal for larger windows. 
    In detail, if $n(E;W)$ is the number of eigenvalues of a fixed hyperbolic surface $X$ in the window $[E,E+W]$, then by Weyl's law, on average we have 
    \[
    \bar n:=\langle n(E;W) \rangle \sim \frac{\area(X)}{4\pi} \cdot W
    \] where $\langle \bullet \rangle$ denotes an average over a range of energies $E$ (the exact specifics of the averaging are immaterial). Berry then considers the number variance 
\[
\Sigma^2(\bar n) = \langle \left| n(E;W)-\bar n\right|^2 \rangle
\]    
and his conjecture, specialized to our context,   is that it should behave like the corresponding quantity in the GOE, namely 
\[
\Sigma^2(\bar n)\sim \frac {2}{\pi^2} \log \bar n .
\]
  No instance of this has been proved to date, though arithmetic surfaces were found to be exceptions to this rule, 
  see \cite{BGGS, LS, RudnickCLT} and the survey \cite{Marklof AQC}. Our main goal is to show that, after averaging over the moduli space $\Mg$, GOE statistics hold in a suitable limit for a smooth version of the number variance. 

\subsection{Our results}
Let $X$ be a compact hyperbolic surface of genus $g\geq 2$, 
and $\lambda_j = 1/4+r_j^2$ be the eigenvalues of the Laplacian on $X$, where the spectral parameter $r_j$, defined up to a sign, lies in $\R\cup [-i/2,i/2]$, to make $\lambda_j\geq 0$.  
 For an even test function $f$ with compactly supported Fourier transform $\^f \in C_c^\infty(\R)$ and $\tau>0$, $L>1$,  
 define the smooth linear statistic
\[
N_{f,L,\tau}(X):= \sum_{j\geq 0} f\left(L\left(r_j-\tau \right)\right) +  f\left(L\left(r_j + \tau \right)\right)  . 
\]
This is a smooth count of the number levels in a frequency window of width\footnote{In some of the older literature, the letter $L$ is reserved for the expected number of levels in the window.} $1/L$ about the fixed frequency $\tau$, equivalently eigenvalues in a 
window of width $W=2\tau/L$ around the energy $E=\tau^2+1/4+1/(2L)^2$.  
Further, set 
\[
\bar N:=\bar N_{f,L,\tau}=2(g-1)\intinf f(L(r-\tau)) r\tanh(\pi r)dr. 
\]

Weyl's law in this context is that for a fixed surface, as $\tau,L \to \infty$, and $L=o(\log \tau)$, we have if $\intinf f(x)dx \neq 0$ 
(see \S~\ref{sec:reduction})
\[
N_{f,L,\tau}(X) \sim  \bar N\sim   (g-1)\intinf f(x)dx \frac{2\tau}{L}.
\] 

For the corresponding smooth linear statistics in the GOE, the variance was computed by Dyson and Mehta \cite[Section II]{Dyson-Mehta} to be 
\[
\Sigma^2_{\rm GOE}(f):=2\intinf |x| \^f(x)^2 dx 
\]
(for the Gaussian Unitary Ensemble, the factor $2$ is dropped). 
Throughout the paper, we use the normalization 
\[
\^f(x) =\frac 1{2\pi} \intinf f(y)e^{-ix y}dy,
\]
 so that $f(y) = \intinf \^f(x)e^{ixy}dx$. 

We study the expectation and variance of $N_{f,L,\tau}$, when averaged over the moduli space $\Mg$ of all genus $g$ surfaces  with respect to the Weil-Petersson measure (see \S~\ref{sec:Mg} for relevant terminology and background).  

For the expectation, we show
\[
\lim_{g\to \infty} \left( \EWp\left(N_{f,L,\tau}\right) - \bar N   \right) =\If 
\]
where 
\[
\If:=\frac{4}{L}\int_0^\infty  \sum_{k=1}^\infty \^f\left(\frac {kx}L \right)\frac{\sinh^2(x/2)}{\sinh(kx/2)} \cos(k\tau x) dx.
\]
So when $g\to \infty$ with $\tau>0$, $L>1$ fixed, the expected number of levels counted by $N_{f,L,\tau}$ is of order $g$, with $\If$ a lower order term, independent of $g$. 
Note that\footnote{The notation $f\ll g$ means $f=O(g)$, and $f\ll_A g$ means that the implied constant depends on the parameter $A$.} 
\[
\If \ll   \frac{e^{L/2}}{L \tau }+\frac 1L \left( 1+\frac 1\tau \right)  
\]
   so that when $L,\tau\to \infty$, $L=o(\log \tau)$ then $\If \to 0$.

Our main object of  study is the variance 
\[
\Sigma^2_g(\tau,L;f):=  \EWp\left( \left|N_{f,L,\tau}-\EWp(N_{f,L,\tau}) \right|^2 \right) .  
\]

We show that  the large genus limit $g\to \infty$ of $\Sigma^2_g(\tau,L;f)$ exists, 
and after taking the short window limit   $L \to \infty$ we recover the GOE result:  
 
\begin{thm}\label{main thm}
Fix $\tau>0$. 
Then in the large genus limit $g\to \infty$, we have
\[
\left( \lim_{g\to \infty}   \Sigma^2_g(\tau,L;f) \right) = \Sigma^2_{\rm GOE}(f)  + O\left( \frac{\log L}{L^2} \right) .
\]
Hence in the double limit, we recover GOE statistics for the variance $\Sigma^2_g(\tau,L;f)$:
\[
\lim_{  L \to \infty }\left( \lim_{g\to \infty}   
\Sigma^2_g(\tau,L;f) \right) = \Sigma^2_{\rm GOE}(f) .
\]
\end{thm}
  
 In view of the above, it is natural to expect that for {\em fixed} $g> 2$, 
for {\em almost all} $X\in \Mg$ (w.r.t. the Weil-Petersson measure), 
the energy variance of the linear  statistic $N_{f,L,\tau}$ coincides with that of GOE. 


 \subsection{About the proof}  

  We use Selberg's trace formula to express the linear statistic  $N_{f,L,\tau}$ as a smooth main term $\bar N$ and a sum $N^{osc}$ over closed geodesics of $X$. The particular choice of the linear statistic restricts the sum to closed geodesics of length at most $L$. We further break up the sum to a sum $N_{\rm sns}$ over simple (i.e. having no self-intersections), nonseparating geodesics (i.e. those simple geodesics $\gamma$ such that $X\backslash \gamma$ is connected), a sum $N_{\rm SSep}$ over simple separating geodesics, and a sum $N'$ over non-simple geodesics. 
  
The expected value of the sum over simple non-separating geodesics  is given by 
\[
\lim_{g\to \infty} \EWp(N_{\rm sns})  = \If   .
\]
    The expected values of the other two sums $N_{\rm SSep}$ and $N'$ vanish in the limit $g\to \infty$.  
    We find
  \[
  \lim_{g\to \infty} \EWp\left( N_{f,L,\tau} -\bar N -\If\right) = 0 .
  \] 
  
  
  The variance $\Sigma^2_g(\tau,L;f) $ involves pairs of closed geodesics, and Mirzakhani's integration formula is used to evaluate averages over $\Mg$ of the sum over {\em simple} pairs of  geodesics, that is pairs of disjoint simple closed geodesics, and among those it is the sum $N_{{\rm sns},2}$ over non-separating pairs (i.e. those simple pairs $(\gamma, \gamma')$ for which $X\backslash \gamma\cup \gamma'$ is connected) which give the dominant contribution in the large genus limit $g\to \infty$: 
  \[
\lim_{g\to \infty}  \EWp(N_{{\rm sns},2})=\Sigma^2_{\rm GOE} (f)+ \If^2   + O\left( \frac{\log L}{L^2} \right).
  \] 
 Here $ \Sigma^2_{\rm GOE} (f)$ comes from diagonal pairs, while the term $\If^2 $ comes from  the off-diagonal pairs.

The contribution of simple separating pairs of  geodesics is bounded by 
\[
 \EWp(N_{{\rm SSep},2}) \ll_L  \frac 1g 
\]
 which vanishes in the limit $g\to \infty$.

 The contribution $N''$ of non-simple pairs of geodesics is not covered by the integration formula, and we use a mixture of considerations to bound the expected value $\EWp(N'')$. For the sum over pairs of geodesics which are both non-simple, or are intersecting, we use a collar lemma to show that the sum is uniformly bounded in terms of the number $Y'_{2,g}$ of such terms and then rely on a bound for $\EWp(Y'_{2,g})$ provided by Mirzakhani and Petri \cite{MP}. For pairs of disjoint geodesics where one is simple and the other is not, we do not have such a uniform bound and we use a variant of the above argument. In total we find $\EWp(N'')\ll_L 1/\sqrt{g}$.   
  
  Putting all these together gives
  \[
  \begin{split}
\lim_{g\to \infty}  \Sigma^2_g(\tau,L;f) &=\lim_{g\to \infty} \EWp( (N^{osc})^2)) - \If^2
 \\
 &=  \Sigma^2_{\rm GOE} (f)   + O\left( \frac{\log L}{L^2} \right)
 \end{split} \]
  and taking $L\to \infty$ gives Theorem~\ref{main thm}. 
  
 
 \subsection{Related work on spectral theory on $\Mg$}

There has been much interest recently in the spectral theory of random surfaces of large genus. One direction was to  give 
 a lower bound for the first eigenvalue $\lambda_1(X)$ for a typical surface $X\in \Mg$ of large genus, showing $\lambda_1(X)>3/16-\varepsilon$  with probability tending to one as $g\to \infty$; here probability is with respect to the Weil-Petersson measure  \cite{LW, WX, Hide}. A similar result was proved for a different model of random curves, namely random covers of large degree, in \cite{MNP}. Monk \cite{Monk} gives bounds on the number of ``exceptional'' eigenvalues $\lambda_j(x)<1/4$ for ``typical'' surfaces of large genus. In a different direction, \cite{GLPT} give bounds for the $L^p$ norms of eigenfunctions for typical surfaces of large genus.

   \subsection{Acknowledgments}
   We thank Jon Keating,  Bram Petri, Shvo Regavim, Igor Wigman, Ouyang Zexuan, and the referee for several comments and corrections, and to Omer Rudnick for the figures.  
   
   This research was supported by the European Research Council (ERC) under the European Union's Horizon 2020 research and innovation programme (grant agreement No. 786758) and by the Israel Science Foundation (grant No. 1881/20).

 \section{Background on Mirzakhani's integration formula}\label{sec:Mg}
 The goal of this section is to present Mirzakhani's integration formula, which allows to integrate certain ``geometric functions'' over the moduli space $\Mg$. For further background, see \cite{Buser, FM, Wright BAMS}. 

\subsection{Moduli spaces and their volumes}

Let $S_g$ be a smooth compact, connected, oriented surface of genus $g\geq 2$. We denote by $\Diff^+(S)$ the group of orientation preserving diffeomorphisms of $S$, and by $\Diff_0(S)$ the subgroup of those isotopic to the identity. 
Teichm\"{u}ller space $\mathcal T(S_g)$ is the set of hyperbolic structures on $S_g$ 
\[
\mathcal T(S_g)=\{(X,f): f:S_g\simeq X\}/\sim
\]
 where $f:S_g\to  X$ is a diffeomorphism of $S_g$  onto a hyperbolic surface $X$ (a ``marking''), and the equivalence is up to homotopy: $(X,f)\sim (X',f')$ if there is an isometry $h:X\to X'$ so that $(f')^{-1}\circ h \circ f:S_g\to S_g$ is isotopic to the identity.  
 That is, $\mathcal T(S_g)$ is the space of homotopy classes of hyperbolic structures on $S_g$. 
  It is an affine space, of dimension $6g-6$. 

The mapping class group $\MCG(S):=\Diff^+(S)/\Diff_0(S)$ is the group of orientation preserving diffeomorphisms up to isotopy \cite{FM}. 
It is a countable group, and acts properly discontinuously on Teichm\"{u}ller space. 
The moduli space $\Mg$ is the quotient $$\Mg=\mathcal T(S_g)/\MCG(S_g).$$

More generally,  for $n\geq 0$, with $2-2g-n<0$, let $S_{g,n}$ be a smooth compact, connected, orientable surface of genus $g$ with $n$ boundary components.  
Denote by $\MCG(S_{g,n})$  the group of orientation preserving diffeomorphisms which setwise fix the boundary 
components\footnote{The literature has versions of $\MCG(S_{g,n})$ where the requirement is that the boundary is fixed pointwise; we follow the conventions in \cite{MP}.}, 
up to isotopy. 
Given $n$ positive numbers $\vec \ell = (\ell_1,\dots, \ell_n)$, let $\mathcal T(S_{g,n},\vec \ell)$ be the space of hyperbolic structures on $S_{g,n}$ with geodesic boundary components of lengths $\ell_1,\dots, \ell_n$.  
Then $\MCG(S_{g,n})$ acts on $\mathcal T(S_{g,n},\vec \ell)$ and the quotient space
\[
\mathcal M_{g,n}(\vec \ell)  = \mathcal T(S_{g,n},\vec \ell)/\MCG(S_{g,n})
\]
is the moduli space of Riemann surfaces of genus $g$ and $n$ geodesic boundary components with lengths given by $\vec \ell$;   when $\vec \ell = (0,\dots,0)$, then $\mathcal M_{g,n}(0,\dots,0)$ is the moduli space of genus $g$ surfaces with $n$ cusps. 

The space $\mathcal T(S_{g,n},\vec \ell)$ has a symplectic form, the Weil-Petersson form, which is invariant under the mapping class group, which induces a volume form ${\rm dVol}^{WP}$   on the moduli space $ \mathcal M_{g,n}(\vec \ell) $. 
 We denote 
\[
V_{g,n}(\ell_1,\dots, \ell_n) = \Vol^{WP} \left(\mathcal M_{g,n}\left(\ell_1,\dots, \ell_n\right)  \right)
\]
and also set 
\[
V_g = \Vol^{WP}(\Mg),  \qquad V_{g,n}=V_{g,n}(0,\dots, 0)  . 
\] 
 
We will need to know volume ratios  \cite[Proposition 3.1]{MP}\footnote{See \cite[footnote page 3]{AM} for a small correction and for more refined asymptotics}:
\begin{equation}\label{ratio V_{g,2}(x,x)/V_g}
\frac{V_{g,n}(\ell_1,\dots, \ell_n)}{V_{g,n}} = \prod_{j=1}^n \frac{\sinh(\ell_j/2)}{\ell_j/2} \cdot 
\left (1+O\left( \frac{(\sum_{j=1}^n \ell_j)\prod_{j=1}^n \ell_j}{g} \right) \right) .
\end{equation}
Furthermore \cite[Theorem 1.4]{MZ}
\begin{equation}\label{ratio V_{g-1,2}/V_g}
\frac{V_{g-1,n+2}}{V_{g,n}} = 1+O\left(\frac {1+n}g \right) .
\end{equation}

 We will also need an estimate on products of volumes   \cite[Lemma 3.2]{MP}: For $q\geq 2$, 
\begin{equation}\label{Gen bound on sum of vols}
\frac 1{V_g}\sum_{ } \prod_{i=1}^q V_{g_i,b_i}  \ll \frac 1{g^{q-1}} 
\end{equation}
where the sum is over all topological types of decompositions of the surface $S_g$ arising from cutting it along $K$ simple disjoint nonisotopic geodesics in $q$ pieces $S_{g_i,b_i}$ having genus $g_i$ and $b_i\geq 1$ boundary components, so that $\sum_{i=1}^q b_i = 2K$ and $\sum_{i=1}^q 2-2g_i -b_i=2-2g$ by the additivity of the Euler characteristic.


\subsection{Mirzakhani's integration formula}

  A closed closed curve on a surface is {\em essential} if it is not contractible, or freely homotopic to one of the boundary components if there are any.  Any essential closed curve on a hyperbolic surface is freely homotopic to a unique geodesic. 
  A closed curve  is {\em simple} if it has no self intersections. 
Fix a simple closed curve $\gamma$ on the base surface $S_g$, and denote by  $\MCG[\gamma]=\{\phi \gamma: \phi\in \MCG\}$ the orbit 
 of $\gamma$ under the mapping class group, that is all curves of the same  ``topological type''  as $\gamma$. The different types are  (see \cite[\S 1.3.1]{FM} and Figure~\ref{fig:cutting surface}): 
\begin{itemize}
\item 
non-separating curves $\gamma_0$, which cut the surface into a surface of signature $(g-1,2)$, i.e. $S_g\backslash \gamma_0$ is a surface of genus $g-1$ with $2$ boundary components, each having length $\ell$.  
\item
For each $i=1,\dots ,\lfloor \frac g2 \rfloor$ the separating curve $\gamma_i$ cutting $S_g$ into two components of  signatures $(i,1)$ and $(g-i,1)$, each having one boundary component of (equal) length $\ell$. 
\end{itemize}

\begin{figure}[ht]
\begin{center}
\includegraphics[height=50mm]{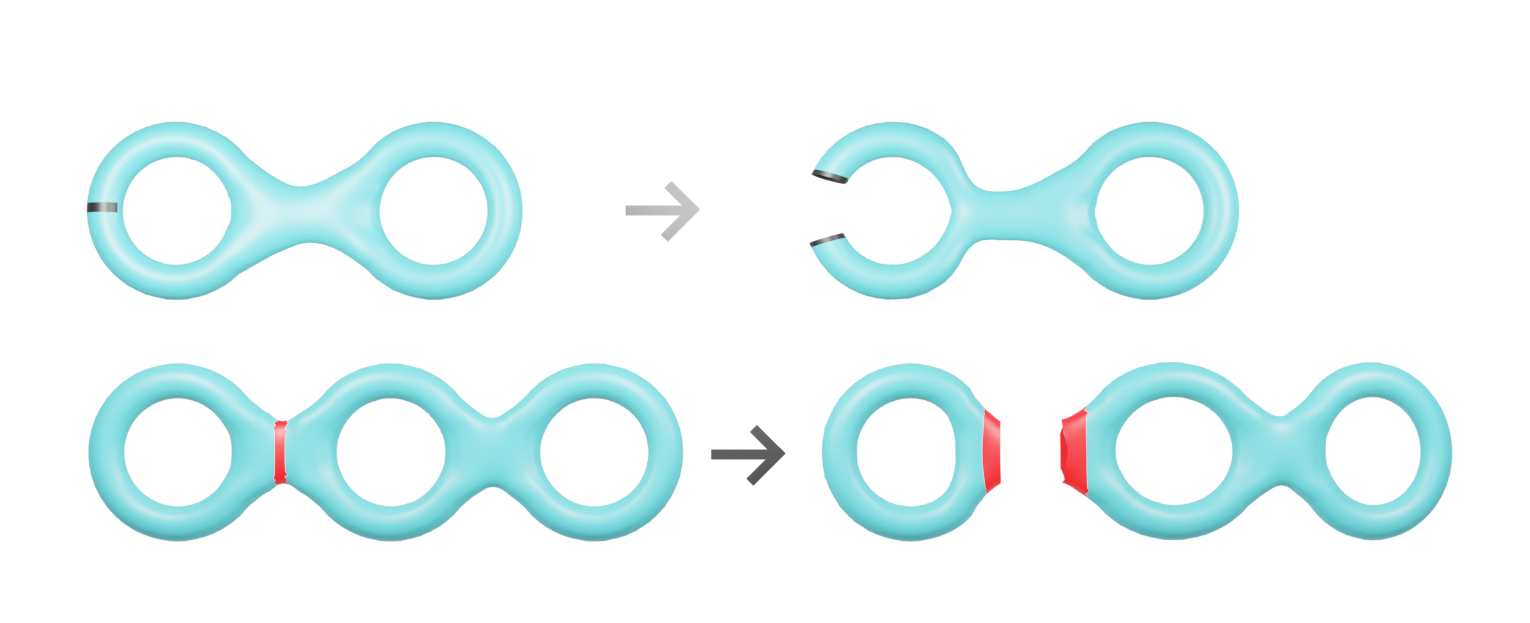}
\caption{ A genus 2 surface $S_{2,0}$ cut by different topological types of geodesics: Top, a non-separating geodesic   gives  a surface $S_{1,2}$ of genus one with two boundary components. 
Bottom, a separating geodesic  cuts the surface into two surfaces $S_{1,1}$ of genus one with one boundary component, and $S_{2,1}$ of genus two with one boundary component.}
\label{fig:cutting surface}
\end{center}
\end{figure}

Given an essential curve $\gamma$ on $S_g$, and a hyperbolic structure $X\in \mathcal T(S_g)$, denote by $\ell_\gamma(X)$  the length (with respect to the metric determined by $X$) of the unique geodesic in the free homotopy class of the  curve $\gamma$.   
Let $f:\R_+\to \R$ be a function on the positive reals. Define $f_\gamma(X)$ to be the sum of $ f\left(\ell_\gamma\left(X\right)\right)$  over the orbit\footnote{So over the cosets $\MCG/{\rm Stab}_{\MCG}(\gamma)$} 
 of $\gamma$ under the mapping class group: 
\[
f_\gamma(X):=\sum_{\alpha \in \MCG[\gamma]} f\left(\ell_\alpha\left(X\right)\right) .
\]
This function is called a geometric function, and is invariant under changing $\gamma$ by $\MCG$, hence descends to the moduli space $\Mg$.


We will need to compute the expected value $$\EWp(f_\gamma) = \frac 1{V_g}\int_{\Mg} f_\gamma(X) {\rm dVol}^{WP}(X) .$$ 
The key to doing so is Mirzakhani's integration formula \cite[Theorem 2.2]{MP}, which says that   the integral of $f_\gamma$ over $\Mg$ is given by   
\begin{equation}\label{MIF one var}
\int_{\Mg} f_\gamma(X) {\rm dVol}^{WP}(X)  = c(\gamma)\int_0^\infty f(\ell) V_{g}(\gamma; \ell) \ell d\ell
\end{equation}
where $0<c(\gamma)\leq 1$ and $V_{g}(\gamma; \ell) $ are determined by the topological type (orbit under $\MCG(S_g)$) of $\gamma$. In particular, if $S_g\backslash \gamma$ is connected (that is $\gamma$ is non-separating), then\footnote{For $g=2$ we get $1$ instead of $1/2$.}
\[
c(\gamma)=\frac 12, \quad g>2
\]  
and
\[
V_{g}(\gamma; x)=V_{g-1,2}(x,x) = \Vol^{WP}(\mathcal M_{g-1,2}(x,x))
\]
is the volume of the moduli space of surfaces of genus $g-1$ with two boundary components, each of length $x$.  If $\gamma$ separates $S_g$ into two pieces: $S_g\backslash \gamma = S_{i,2}\cup S_{g-i,2}$ then each has one boundary component, both of the same length, and the sum of their genera is $g$. In that case
\[
V_{g}(\gamma; x) = V_{i,1}(x)V_{g-i,1}(x) .
\]

More generally, given a multi-curve, that is a $k$-tuple $\Gamma = (\gamma_1,\dots,\gamma_k)$ of disjoint essential simple closed curves, not freely homotopic between themselves, and a function $f:\R_+^k\to \R$, we define 
\[
f_\Gamma(X):= \sum_{\alpha =(\alpha_1,\dots, \alpha_k)  \in \MCG[\Gamma]} 
f\left(\ell_{\alpha_1}\left(X\right),\dots, \ell_{\alpha_k}\left(X\right) \right) 
\]
where $\MCG[\Gamma] = (\phi\gamma_1,\dots, \phi \gamma_k):\phi \in \MCG\}$ is the orbit under the mapping class group. 
For instance, taking pairs of non-homotopic curves $(\gamma,\gamma')$, the orbits of $\MCG(S_g)$ are completely described by the topology of the complement $S_g\backslash \gamma\cup \gamma'$ (see e.g.  \cite[\S 1.3.1]{FM}), in particular there is a unique orbit of 
non-separating pairs, where $S_g\backslash \gamma\cup \gamma' = S_{g-2,4}$ is a surface of genus $g-2$ with $4$ boundary components (Figure~\ref{fig:two bdry}).


 Mirzakhani's integration formula \cite[Lemma 7.3]{MirInv} says that
\begin{equation}\label{gen M integ formula}
\int_{\Mg}f_\Gamma(X) {\rm dVol}^{WP}(X)  =c(\Gamma) \int_{\R_+^k} f(x_1,\dots, x_k) V_g(\Gamma, \mathbf x) \prod_{j=1}^k x_j dx_j
\end{equation}
where $V_g(\Gamma, \mathbf x) $ are volumes of moduli spaces determined by $\Gamma$: if $S_g\backslash \Gamma = \cup_i S_{g_i,n_i}$ then 
\[
V_g(\Gamma, \mathbf x) = \prod_i V_{g_i,n_i}(\mathbf x)
\]  
 and   $0<c(\Gamma)\leq 1$ are constants determined by $\Gamma$, see \cite[footnote on page 368]{Wright BAMS} for an expression. 
 In particular, if $S_g\backslash \cup_{j=1}^k \gamma_j$ is connected then $c(\Gamma) = 1/2^k$.

\section{Reduction to sums over closed geodesics}\label{sec:reduction} 

We recall the Selberg trace formula: 
For a compact hyperbolic surface $X$ of genus $g$, for each $j\geq 0$ fix $r_j\in \C$ so that the $j$-th Laplace eigenvalue is $\lambda_j = \frac 14+r_j^2$. Let $h$ be an even function whose Fourier transform $\^h(u) = \frac 1{2\pi}\intinf h(r)e^{-iru}dr   $ is smooth and compactly supported, so that   $h(r) =  \intinf \^h(u)e^{iru}du$ is rapidly decaying and extends to an entire function.  Then
\[
\sum_{j=0}^\infty h(r_j) = (g-1) \intinf h(r)r \tanh(\pi r)dr + \sum_{\substack{\gamma \;{\rm primitive}\\{\rm oriented}}} \sum_{k=1}^\infty\frac{\ell_\gamma  \^h(k\ell_\gamma)}{2\sinh(k\ell_\gamma/2)} 
\]
where the sum is over all primitive {\em oriented} closed geodesics, equivalently over all nontrivial primitive conjugacy classes in the fundamental group of the surface. 

Now take $f$ even such that the Fourier transform $\^f\in C_c^\infty(\R)$ is smooth of compact support (and even). 
To fix ideas, lets assume that $\supp \^f = [-1,1]$.
 We take 
\[
h(r)=f\left(L\left(r-\tau\right)\right)+ f\left(L\left(r+\tau\right)\right)
\] 
which is even, with Fourier transform
\[
\^h(u) = \frac{2\cos(\tau u)}{L} \^f\left(\frac uL \right) 
\]
which is  smooth and compactly supported. We set
\[
N_{f,L,\tau}(X):= \sum_{j\geq 0} h(r_j) = \sum_{j\geq 0} f\left(L\left(r_j-\tau \right)\right) +  f\left(L\left(r_j + \tau \right)\right) .
\]
Note that if $\tau>0$ and $L\cdot \tau\gg 1$,   then the contribution of  second term summed over the eigenvalues $\lambda_j\geq 1/4$ can be shown to be negligible from Weyl's law. 
We have chosen to retain this symmetric form in part because it is convenient to directly use the Selberg trace formula here.  

Selberg's trace formula  allows us  to   decompose 
 \[
 N_{f,L,\tau}=\bar N+N^{osc}
 \]
with a ``smooth'' main term 
\[
\bar N=\bar N_{f,L,\tau}:=(g-1) \intinf \left\{ f\left(L\left(r-\tau\right)\right)+ f\left(L\left(r+\tau\right)\right) \right\}  r \tanh(\pi r)dr
\]
 which if $\intinf f(x)dx\neq 0$ is  asymptotic as $\tau\to \infty$, $L>1$  to 
\[
\bar N \sim (g-1) \intinf f(x)dx \frac{2\tau}{L} , 
\]
 and 
\begin{equation}\label{expand Nosc}
N^{osc}_{L}(\tau;X) = \frac 1{L} \sum_{\substack{\gamma \;{\rm primitive}\\{\rm oriented}}} \sum_{k\geq 1} 
\frac{\ell_\gamma}{ \sinh(k\ell_\gamma/2)} \^f\left(\frac{k\ell_\gamma}{  L}\right)  \cos(\tau k \ell_\gamma)
\end{equation}
where the sum is over all closed primitive  {\em oriented} geodesics $\gamma$, with $\ell_\gamma$ being the length. 
However, the summands do not depend on the orientation of the geodesics, so we can write
\begin{equation}\label{split N}
N^{osc} =2 \sum_{\gamma} H_L(\ell_\gamma)
\end{equation}
the sum over all primitive  {\em non-oriented} closed geodesics, where  
\[
H_L(x) =\frac{x}{L}\sum_{k=1}^\infty F(kx), \quad F(x) =   \frac{\^ f\left( \frac { x}{L}\right) \cos( x \tau)}{ \sinh( x /2)} ,
\]
the sum over $k\geq 1$ represents repetitions of a single primitive geodesic. 

 Note that using the Prime Geodesic Theorem \cite{Buser} allows us to bound $N^{osc}_{L}(\tau;X) \ll e^{L/2}/L$ so that when $L=o(\log \tau)$, we have $N_{f,L,\tau} \sim  (g-1) \intinf f(x)dx \frac{2\tau}{L}$. 
 
We split the sum \eqref{split N} over closed geodesics $\gamma$ taking into account the different types
of these geodesics, as 
\[
N^{osc} =  N_{\rm{sns}} + N_{\rm{SSep}} +N'
\]
where in $N_{sns}$   the sum runs over simple,  non-separating primitive closed geodesics, $N_{\rm SSep}$ is the sum over simple primitive geodesics which separate the surface into two connected components, 
and $N'$ is the sum over non-simple primitive geodesics;  all geodesics are {\em not} oriented. 
  

For the second moment, we write 
\[
(N^{osc})^2  = N_{{\rm sns},2} + N_{{\rm SSep},2}+N'' 
\]
 where $N_{{\rm sns},2}$ is the sum over pairs $(\gamma,\gamma')$ of identical or disjoint simple geodesics such that 
 $S\backslash \gamma\cup \gamma'$ is connected, $ N_{{\rm SSep},2}$ is the sum over pairs of identical or disjoint simple geodesics such that 
 $S\backslash \gamma\cup \gamma'$ is disconnected, and $N''$ is the sum over the remaining pairs of orbits, to be dealt with in \S~\ref{sec:nonsimple}.

\section{The expectation of $N^{osc}$}\label{sec:expectation}

Our goal in this section is to compute the expected value $\EWp(N^{osc})$: 
\begin{prop}\label{prop:expectation}
Fix $\tau>0$ and $L>1$. Then  
\[
\lim_{g\to \infty} \EWp\left(N^{osc}\right)  = \If  .
\]
\end{prop}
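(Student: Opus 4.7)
The plan is to treat the three pieces in the decomposition $N^{osc}=N_{\rm sns}+N_{\rm SSep}+N'$ separately, and show that in the limit $g\to\infty$ only $N_{\rm sns}$ survives and produces exactly $\If$, while $\EWp(N_{\rm SSep})$ and $\EWp(N')$ both vanish.

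For $N_{\rm sns}$, I would pick a representative $\gamma_0$ of the unique topological type of a simple non-separating curve on $S_g$, so that
\[
N_{\rm sns}(X) = 2\sum_{\alpha\in\MCG[\gamma_0]} H_L(\ell_\alpha(X))
\]
is a geometric function. The single-curve Mirzakhani integration formula \eqref{MIF one var}, with $c(\gamma_0)=\tfrac12$ and $V_g(\gamma_0;\ell)=V_{g-1,2}(\ell,\ell)$, then gives
\[
\EWp(N_{\rm sns}) = \frac{1}{V_g}\int_0^\infty H_L(\ell)\,V_{g-1,2}(\ell,\ell)\,\ell\, d\ell.
\]
Combining the volume asymptotics \eqref{ratio V_{g,2}(x,x)/V_g} and \eqref{ratio V_{g-1,2}/V_g} yields $V_{g-1,2}(\ell,\ell)/V_g = (\sinh(\ell/2)/(\ell/2))^2(1+O_L(1/g))$, uniformly on the compact support of $\^f(\cdot/L)$. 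Plugging this in and substituting $H_L(\ell)=(\ell/L)\sum_{k\ge 1}F(k\ell)$ produces, in the main term,
\[
\frac{4}{L}\int_0^\infty \sum_{k\ge 1}\^f\!\left(\frac{k\ell}{L}\right)\frac{\sinh^2(\ell/2)}{\sinh(k\ell/2)}\cos(k\tau\ell)\,d\ell = \If,
\]
while the error is $O_L(1/g)\to 0$.

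For $N_{\rm SSep}$, I would apply \eqref{MIF one var} separately to each separating topological type $\gamma_i$ (for which $V_g(\gamma_i;\ell)=V_{i,1}(\ell)V_{g-i,1}(\ell)$ and $c(\gamma_i)\le 1$), extract the $\ell$-dependence of $V_{j,1}(\ell)$ through \eqref{ratio V_{g,2}(x,x)/V_g}, and then apply the product-volume estimate \eqref{Gen bound on sum of vols} with $q=2$, which gives $V_g^{-1}\sum_{i=1}^{\lfloor g/2\rfloor} V_{i,1}V_{g-i,1}\ll 1/g$. Together with the compact support of $\^f$, this yields $\EWp(N_{\rm SSep})\ll_L 1/g\to 0$.

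The main obstacle will be $N'$, the sum over non-simple primitive geodesics, to which Mirzakhani's integration formula does not apply. My plan is to bound $|H_L(\ell_\gamma)|$ pointwise uniformly, using the compact support of $\^f$ (which forces $\ell_\gamma\le L$) together with the classical universal lower bound on the length of a non-simple closed geodesic on any hyperbolic surface, which keeps the inner sum $\sum_{k\ge 1} F(k\ell_\gamma)$ bounded; then invoke a Mirzakhani--Petri type estimate \cite{MP} showing that the Weil--Petersson expectation of the number of primitive non-simple closed geodesics of length $\le L$ is $O_L(1/g)$, so that $\EWp(N')\ll_L 1/g\to 0$. Combining the three contributions gives the proposition.
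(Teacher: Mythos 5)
Your proposal is correct and follows essentially the same route as the paper: the same decomposition $N^{osc}=N_{\rm sns}+N_{\rm SSep}+N'$, Mirzakhani's integration formula with $c=\tfrac12$ and the volume ratios \eqref{ratio V_{g,2}(x,x)/V_g}, \eqref{ratio V_{g-1,2}/V_g} for the main term, the product-volume bound \eqref{Gen bound on sum of vols} for the separating piece, and the universal length lower bound for non-simple geodesics combined with the Mirzakhani--Petri counting estimate for $N'$. The only difference is organizational: the paper defers the bounds on $\EWp(N_{\rm SSep})$ and $\EWp(N')$ to the later variance sections, whereas you sketch them in place.
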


\subsection{Bounds for $H_L(x)$} 
We first study the function  
\[
H_L(x) =\frac{x}{L}\sum_{k=1}^\infty F(kx), \quad F(x) =   \frac{\^ f\left( \frac { x}{L}\right) \cos( x \tau)}{ \sinh( x /2)} .
\]

 \begin{lem}\label{lem:bounds for HL}
 
$H_L(x)$ vanishes for $x>L$, and is smooth in $(0,L]$. For $L>2$,   uniformly in $\tau$, 

i) For $0<x<1/2$, we have 
\begin{equation}\label{HL near 0}
|H_L(x) | \leq \frac 2L\left(   \log \frac 1x +O\left(1 \right)\right) .
\end{equation}

ii) For $x\geq 1/2$, we have   
$$|H_L(x) | \ll \frac xL   e^{-x/2} \cdot \mathbf 1_{[0,L]}(x).$$
\end{lem}

\begin{proof}
Observe that $H_L(x)$ vanishes for $x>L$ since $\^f$ is supported in $[-1,1]$, and for $0<x\leq L$, the sum  is finite, over $1\leq k\leq L/x$ and therefore $H_L(x)$ is smooth in $(0,L]$. 
We use a crude bound (recall $\^f$ is supported in $[-1,1]$). 
\[
|H_L(x)|\ll G_L(x):=\frac xL \sum_{k\leq  L/x} \frac 1{\sinh(kx/2)} .
\]

Assume first that  $0<x<1/2$. We use $\sinh(t)\geq t$ for $t=kx/2 \in(0,1/2)$, and 
\[ 
\sinh(t) = e^{t}\frac {1-e^{-2t}}{2} \geq e^t \frac {1-e^{-1/2}}{2} \geq  0.19  \cdot e^{t} 
\]
 for $t=kx/2 \geq 1/2$,  
to obtain
\[
G_L(x)\leq \frac xL \sum_{1\leq k<1/x} \frac 1{kx/2} +  \frac xL \sum_{1/x < k\leq L/x} \frac {6}{e^{kx/2}}  
\]
We have
\[
 \frac xL \sum_{1\leq k<1/x} \frac 1{kx/2}  = \frac 2L  \sum_{1\leq k<1/x} \frac 1k \leq \frac 2L\left( \log \frac 1x +1 \right), 
\]
on comparing the harmonic sum to an integral: 
$$\sum_{k=1}^N \frac 1k <\int_1^{N+1} \frac {dt}{t} +1.$$ 
For the second sum, we have since $x\in (0,1/2)$, 
\[
\frac xL \sum_{1/x < k\leq L/x} \frac {6}{e^{kx/2}}   \ll \frac xL  \frac 1{1-e^{-x/2}}  \ll \frac 1L
\]
so that we obtain 
\[
G_L(x) \leq  \frac 2L \left( \log \frac 1x +O(1) \right)   .
\]


For $x\geq 1/2$, use when $t=kx/2\geq 1/4$ that $\sinh(t)\geq  0.19  \cdot e^{t} $ as above, 
so that
\[
G_L(x)\ll \frac xL \sum_{k\geq 1}e^{-kx/2} \ll \frac xL  e^{-x/2}
\]
which is (ii). 
\end{proof}


\subsection{Properties of $\If$}
We define 
\[
\If:=\int_0^\infty H_L(x) \left(\frac{\sinh(x/2)}{x/2} \right)^2 x dx
\]
Since $H_L(x)$ is integrable near zero by \eqref{HL near 0} and vanishes for $x>L$, the integral is absolutely convergent and we can change summation and integration to write
\[
\If=\frac{4}{L}\int_0^\infty  \sum_{k=1}^\infty \^f\left(\frac {kx}L \right)\frac{\sinh^2(x/2)}{\sinh(kx/2)} \cos(k\tau x) dx.
\]

The following lemma shows that $\If=o(1)$  if $L\to \infty$, $L=o(\log \tau)$:
\begin{lem}\label{lem: int H_L}
For  $\tau>0$ and $L\gg 1$,  
\[
\If \ll   \frac{e^{L/2}}{L \tau }+\frac 1L \left( 1+\frac 1\tau \right)  .
\]
\end{lem}
\begin{proof}
We want to compute
\begin{multline*}
\int_0^\infty H_L(x)\left( \frac{\sinh(x/2)}{x/2} \right)^2 xdx  
\\
= \sum_{k\geq 1} \frac 1{L}\int_0^\infty x \frac{\^f\left(\frac{kx}{L} \right) \cos(kx\tau)}{ \sinh(kx/2)} 
\left( \frac{\sinh(x/2)}{x/2} \right)^2 xdx  
\\
=4\sum_{k\geq 1} \int_0^\infty  \^f(ky) \frac{ (\sinh Ly/2)^2}{\sinh (kLy/2)} \cos( k\tau Ly) dy 
\end{multline*}
 (after changing variable).
 
For $k=1$, 
we obtain 
\begin{multline*}
 4 \int_0^\infty \^f(y) \sinh(Ly/2)\cos(\tau Ly) dy =
\\
-\frac 4{\tau L} \int_0^\infty \left\{ (\^f)'(y) \sinh(Ly/2) +\frac L2 \^f(y) \cosh(Ly/2) \right\} \sin(\tau Ly) dy
\end{multline*}
after integration by parts. Taking absolute values using $|\sin|\leq 1$ and $\supp \^f \subset [-1,1]$ gives
\[
\ll \frac{e^{L/2}}{L \tau  } .
\]


For $k=2$, when $\tau>0$, we integrate by parts, noting that $ \frac{ \sinh(x/2)^2}{\sinh x}$ is bounded and vanishes at $x=0$, with derivative $1/(4 \cosh^2 (x/2)) \ll e^{-x}$, to obtain 
 \begin{multline*}
\frac 1L \int_0^\infty \^f\left(\frac{2x}{L}\right) \frac{ \sinh^2(x/2)}{\sinh x} \cos(2\tau x) dx 
\\
=
-\frac 1{2\tau L} \int_0^\infty \left\{ \^f\left(\frac{2x}{L}\right) \left(  \frac{ \sinh^2(x/2) }{\sinh x}\right)' 
+\frac 2L  (\^f)'\left(\frac{2x}{L}\right) \frac{ \sinh^2(x/2) }{\sinh x} \right\} \sin(2\tau x) dx .
\end{multline*}
 Taking absolute values gives
 \[
 \ll \frac 1{\tau L} \int_0^\infty \left|  \^f\left(\frac{2x}{L}\right)  \right|e^{- x}  dx  + \frac 1{\tau L^2 }\int_0^\infty \left|    
 (\^f)'\left(\frac{2x}{L}\right) \right|dx \ll \frac 1{\tau L} .
 \]

For $k> 2$  we use 
\[
\sinh(kz) = \sinh z \cosh((k-1)z) + \cosh z \sinh((k-1)z) >\frac 12 \sinh z e^{(k-1)z}
\]
so that 
\[
\frac{(\sinh z)^2}{\sinh k z} < \frac 12 \sinh z e^{-(k-1)z}
\]
and
\begin{multline*}
\sum_{k> 2}  \left|  \int_0^\infty  \^f(ky) \frac{ (\sinh  \frac{Ly}{2})^2}{\sinh kLy/2} \cos( k\tau Ly) dy \right| 
\\
\ll \int_0^1 \sinh  \frac{Ly}{2} \sum_{k\geq 3} e^{-(k-1) Ly/2} dy
\ll \frac 1L  \int_0^\infty  \sinh(z) \frac{e^{-2z}}{1-e^{-z}} dz   
\ll \frac 1L  .
\end{multline*}
\end{proof}

 
 \subsection{Proof of Proposition~\ref{prop:expectation}}

\begin{proof}
 It will suffice to show  
 \[
 \lim_{g\to \infty}\EWp(N_{sns})  =\If  
 \]
which we do below, and 
 \begin{equation} \label{orig first moment of N_{Ssep}}
 \EWp\left(  N_{\rm SSep}  \right)\ll_{\tau,L} \frac 1g, 
 \end{equation}
which we will  do in our treatment of the variance, in the course of the  proof of  Proposition~\ref{prop:separating pairs}, 
see  \eqref{first moment of N_{Ssep}}, and 
\begin{equation}\label{orig expect N'}
 \EWp\left( N' \right) \ll _{\tau,L} \frac 1g  ,
\end{equation}
which we will do in \S~\ref{sec:boundN' and nonsimplepairs},  
see \eqref{expect N'}.   

 Recall \eqref{split N}
\begin{equation*}
 N_{sns} =2 \sum_{\substack{\gamma\\ \rm simple \\ \rm non-separating}} H_L(\ell_\gamma) .
\end{equation*}

 By Mirzakhani's integration formula \eqref{MIF one var}, for $g>2$, 
\[
\EWp(N_{sns}) =2\cdot \frac 12    \cdot    \int_{0}^{ \infty } H_L(x)  \frac{ V_{g-1,2}( x,x)} {V_g}
 x dx  .
\]
This is because the sum over all simple non-separating geodesics amounts to taking the sum over a single orbit of the mapping class group, thus defining the geometric function associated with these orbits.
By \eqref{ratio V_{g,2}(x,x)/V_g} and \eqref{ratio V_{g-1,2}/V_g}
\[
\frac{V_{g-1,2}(x,x)}{V_{g}} =\frac{V_{g-1,2}(x,x)}{V_{g-1,2}} \cdot \frac{V_{g-1,2}}{V_g}
= \left( \frac {\sinh(x/2)}{x/2} \right)^2 \cdot \left( 1 +O\left( \frac {1+x^3}g \right) \right) .
\]
Therefore
\[
\lim_{g\to \infty} \EWp(N_{sns}) = \int_0^{\infty} H_L(x) \left( \frac {\sinh(x/2)}{x/2} \right)^2   x d x  =\If 
\]
 proving Proposition~\ref{prop:expectation}. 
\end{proof}

\section{The variance}\label{sec:variance}

Notice that the main term $\bar N$ is independent of the random geometry,
and will therefore  disappear from the variance. So the variance of $N_{f,L,\tau}$ coincides with  the variance of $N^{osc}$. 

We now want to compute the second moment of $N^{osc}$. 
We write 
\[
(N^{osc})^2  = N_{{\rm sns},2} + N_{{\rm SSep},2}+N'' 
\]
 where $N_{{\rm sns},2}$ is the sum over pairs $(\gamma,\gamma')$ of identical ($\gamma=\gamma'$) or disjoint (i.e. $\gamma\cap \gamma'=\emptyset$)  simple geodesics such that 
 $S\backslash \gamma\cup \gamma'$ is connected, $ N_{{\rm SSep},2}$ is the sum over pairs of identical or disjoint simple geodesics such that 
 $S\backslash \gamma\cup \gamma'$ is disconnected, and $N''$ is the sum over remaining pairs.

What we find is that  only the sum $N_{{\rm sns},2}$ over {\em simple, non-separating} geodesics contribute to the main term.  
We will show (Proposition~\ref{prop:sns})  that for fixed $\tau>0$, and $L\gg 1$,   
\[
\lim_{g\to \infty}\EWp \left(  N_{\rm{sns},2}   \right) = \Sigma^2_{\rm GOE}(f)  +\If^2  
+O \left(\frac 1{(\tau L)^2}+ \frac{\log L}{L^2} \right) 
\]
and that (Proposition~\ref{prop:separating pairs}) 
\[
 \EWp \left(    N_{\rm{SSep},2}  \right)   \ll_{L,\tau} \frac 1g  
\] 
and (Proposition~\ref{prop:expected sum nonsimple pairs})
\[
 \EWp \left(  N'' \right)    \ll_{L,\tau}  \frac 1{\sqrt{g}} .
\]
This will give for fixed $\tau>0$, 
\[
  \lim_{g\to \infty}\EWp \left( \left(N^{osc} \right)^2 \right) 
=   \Sigma^2_{\rm GOE}(f)+\If^2 +O_\tau \left( \frac{\log L}{L^2} \right) .
\]
Therefore
\[
\begin{split}
\Var(N_{f,L,\tau}) &= \Var(N^{osc}) = \EWp\left( \left(N^{osc} \right)^2 \right) - \left( \EWp \left(N^{osc}\right) \right)^2
\\
&\underset{g\to \infty}{\longrightarrow} 
\Sigma^2_{\rm GOE}(f)  + \If^2 + O \left( \frac{\log L}{L^2} \right) -  \If^2  
\\
&=\Sigma^2_{\rm GOE}(f)  +  O \left( \frac{\log L}{L^2} \right) 
\end{split}
\]
so that for fixed $\tau>0$, 
\[
\lim_{L\to \infty} \left( \lim_{g\to \infty} \EWp \left( |N_{f,L,\tau}-\EWp(N_{f,L,\tau}) |^2 \right)  \right) =  \Sigma^2_{\rm GOE}(f) 
\]
which proves Theorem~\ref{main thm}.

\subsection{Simple non-separating geodesics} 
The term $N_{\rm{sns},2} $ is given by
\[
  N_{\rm{sns},2}    =   4  \sum_{\gamma \;\; \rm{sns}}H_L\left(\ell_\gamma\left(X \right) \right)^2 + 
 4 \sum_{\substack{(\gamma,\gamma') \;\; \rm{sns} \\ \gamma\cap \gamma'=\emptyset}}H_L\left(\ell_\gamma\left(X \right) \right) 
H_L\left(\ell_{\gamma'}\left(X \right) \right)
\]
where the first sum (the diagonal pairs) is over simple, non-separating geodesics, and the second sum (off-diagonal pairs) is over pairs of disjoint simple geodesics $(\gamma,\gamma')$ so that $S_g\backslash \gamma\cup \gamma'$ is connected. 

\begin{prop}\label{prop:sns}
For $\tau>0$, $L\gg 1$, 
\[
\lim_{g\to \infty}\EWp \left( N_{\rm{sns},2}\right) =  \Sigma^2_{\rm GOE}(f)   +\If^2 +O\left(\frac 1{(\tau L)^2}+ \frac{\log L}{L^2} \right) .
\]
\end{prop}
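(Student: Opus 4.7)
The term $N_{\rm sns,2}$ is already split into a diagonal piece (a single sns curve, summand $H_L(\ell_\gamma)^2$) and an off-diagonal piece (ordered pairs of disjoint distinct sns curves with connected complement). The plan is to apply Mirzakhani's integration formula separately to each and to use the volume ratio asymptotics \eqref{ratio V_{g,2}(x,x)/V_g} and \eqref{ratio V_{g-1,2}/V_g} to pass to the large-genus limit inside the integrals (justified because $\supp\hat f=[-1,1]$ forces $\ell\leq L$ in the support, so the polynomial-in-$\ell$ error in $g$ is $O_L(1/g)$ and may be absorbed).

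For the diagonal, the single-curve formula \eqref{MIF one var} with $c(\gamma)=1/2$ and $V_g(\gamma;\ell)=V_{g-1,2}(\ell,\ell)$ gives in the limit
\[
8\sum_{k,k'\geq 1}\frac{1}{L^2}\int_0^\infty \ell \sinh^2(\ell/2)\, F(k\ell)F(k'\ell)\,d\ell,\qquad F(x)=\frac{\hat f(x/L)\cos(x\tau)}{\sinh(x/2)}.
\]
In the $(k,k')=(1,1)$ term the $\sinh^2(\ell/2)$ in the numerator cancels the $\sinh(\ell/2)^2$ in the denominators; substituting $u=\ell/L$ and using $2\cos^2(uL\tau)=1+\cos(2uL\tau)$ yields $4\int_0^\infty u\hat f(u)^2 du=\Sigma^2_{\rm GOE}(f)$ from the constant piece (since $\hat f$ is even), while the oscillatory piece is $O_A((\tau L)^{-A})$ by repeated integration by parts. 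For $(k,k')\neq(1,1)$ one uses $\sinh^2(\ell/2)/(\sinh(k\ell/2)\sinh(k'\ell/2))\ll e^{-(k+k'-2)\ell/2}$ (the bound already invoked in Lemma~\ref{lem: int H_L}); summing over such pairs and integrating against $\ell\hat f(k\ell/L)\hat f(k'\ell/L)$ over $[0,L/\max(k,k')]$ yields $O(\log L/L^2)$, the logarithm arising from the block $k=k'\geq 2$ where the integrand is bounded but not exponentially decaying near $\ell=0$ (the other blocks contribute $O(1/L^2)$ or less, with the $\cos k\tau\ell\cos k'\tau\ell$ factor providing further oscillatory savings when $k\neq k'$).

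For the off-diagonal, an ordered pair of disjoint sns curves with connected complement cuts $S_g$ into a single piece $S_{g-2,4}$. The multi-curve formula \eqref{gen M integ formula} with $V_g(\Gamma,\ell_1,\ell_2)=V_{g-2,4}(\ell_1,\ell_1,\ell_2,\ell_2)$ and the appropriate constant $c(\Gamma)$ (whose bookkeeping must match the ordered sum in $N_{\rm sns,2}$ so that the total prefactor is $1$) gives
\[
\int_0^\infty\!\!\int_0^\infty H_L(\ell_1)H_L(\ell_2)\,\frac{V_{g-2,4}(\ell_1,\ell_1,\ell_2,\ell_2)}{V_g}\,\ell_1\ell_2\,d\ell_1\,d\ell_2.
\]
Iterating \eqref{ratio V_{g-1,2}/V_g} and applying \eqref{ratio V_{g,2}(x,x)/V_g} with $n=4$ shows the volume ratio tends to $\prod_{j=1,2}(\sinh(\ell_j/2)/(\ell_j/2))^2$; the integral then factorizes and each factor is exactly the integral identified with $\If$ in the proof of Proposition~\ref{prop:expectation}, producing $\If^2$.

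\textbf{Main obstacle.} The delicate point is not any single limit but the combinatorial bookkeeping: one must correctly pin down the constant $c(\Gamma)$ for a pair of sns curves with connected complement \emph{and} the ordered/unordered convention built into the sum over $(\gamma,\gamma')$, so that the off-diagonal contribution is $\If^2$ with no extraneous factor. After that, the remaining work is routine: the $(1,1)$ diagonal term is a direct computation, and all other $(k,k')$ contributions are controlled by the same exponential/oscillatory bounds already used in Lemma~\ref{lem: int H_L}, with the $k=k'=2$ diagonal block singled out as the source of the $\log L/L^2$ error.
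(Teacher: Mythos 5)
Your plan follows the paper's proof almost step for step: the same diagonal/off-diagonal split, Mirzakhani's formula with $c=1/2$ (resp. $c(\Gamma)=1/4$ for the connected-complement pair, so that the ordered sum with its factor $4$ carries net prefactor $1$), the volume ratios \eqref{ratio V_{g,2}(x,x)/V_g}--\eqref{ratio V_{g-1,2}/V_g}, the extraction of $\Sigma^2_{\rm GOE}(f)$ from the $(k,k')=(1,1)$ term via $2\cos^2=1+\cos$, and the factorization of the off-diagonal integral into $\If^2$. The prefactor $8$ and the identity $4\int_0^\infty u\^f(u)^2\,du=\Sigma^2_{\rm GOE}(f)$ are both correct.

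There is, however, one step that fails as written. You claim the sum over $(k,k')\neq(1,1)$ is controlled by "the same exponential bounds already used in Lemma~\ref{lem: int H_L}," i.e.\ by $\sinh^2(\ell/2)/(\sinh(k\ell/2)\sinh(k'\ell/2))\ll e^{-(k+k'-2)\ell/2}$ together with the support truncation $\ell\le L/\max(k,k')$. This bound gives $|I_L(k,k')|\ll 1/((k+k'-2)^2L^2)$, and the truncation does not improve it because $(k+k'-2)L/\max(k,k')\gg L$, so the exponential has already saturated. Summing over all pairs with $k+k'=m$ (there are about $m$ of them) then produces $\sum_m m/((m-2)^2L^2)$, which diverges. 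In Lemma~\ref{lem: int H_L} the analogous sum is one-dimensional and converges; here it does not. The paper closes this by splitting at $k+k'=L^2$ and, for the tail, replacing the exponential bound by $\sinh(y)/\sinh(ky)\le 1/k$ so that the truncated integral yields $|I_L(k,k')|\ll 1/(k_1k_2^3)$, whose tail sum is $O(\log L/L^4)$. Relatedly, your attribution of the $\log L$ to the block $k=k'\ge 2$ is off: that block contributes only $O(1/L^2)$ (the integrand there does decay like $e^{-(2k-2)\ell/2}$); the logarithm comes from the multiplicity $\#\{(k,k'):k+k'=m\}\asymp m$ accumulated over $3\le m\le L^2$. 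Neither point changes the final error term $O_A((\tau L)^{-A}+\log L/L^2)$, but the large-$(k,k')$ tail does require the extra device, not just the bounds you cite.
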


We use Mirzakhani's integration formula  to evaluate the expected values over $\Mg$ of each of the two terms, that is the diagonal and off-diagonal sums. 
Proposition~\ref{prop:sns} will follow from Lemma~\ref{prop:diagonal sns} and Lemma~\ref{lem:sns nondiag}.

\begin{lem}\label{prop:diagonal sns}
For $\tau>0$,  
\[
 \lim_{g\to \infty}  \EWp \left(  \sum_{\gamma \;\; \rm{sns}}H_L\left(\ell_\gamma \right)^2 \right)  =  
  \frac 14  \Sigma^2_{\rm GOE}(f) + O\left(\frac 1{(\tau L)^2}+ \frac{\log L}{L^2} \right) 
\]
where the sum is over simple, non-separating  {\em non-oriented} geodesics. 
\end{lem}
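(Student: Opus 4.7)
The plan is to compute the expectation exactly via Mirzakhani's integration formula, take the large genus limit using the volume ratio asymptotics \eqref{ratio V_{g,2}(x,x)/V_g}--\eqref{ratio V_{g-1,2}/V_g}, and then extract the main term from the $k=m=1$ Fourier diagonal while proving that every other contribution is absorbed by the error.

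First I would apply \eqref{MIF one var} to the $\MCG$-orbit of a non-separating simple closed curve, which has $c(\gamma)=1/2$ (for $g>2$) and volume factor $V_{g-1,2}(\ell,\ell)$. This gives
\[
\EWp\Bigl(\sum_{\gamma\ \mathrm{sns}} H_L(\ell_\gamma)^2\Bigr)=\tfrac12\int_0^\infty H_L(\ell)^2\,\frac{V_{g-1,2}(\ell,\ell)}{V_g}\,\ell\,d\ell.
\]
The integrand is supported in $\ell\in[0,L]$ because $\hat f$ is supported in $[-1,1]$ and $H_L(\ell)$ inherits that constraint, so the $g$-dependence enters only through the volume ratio on a bounded interval. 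By \eqref{ratio V_{g,2}(x,x)/V_g} and \eqref{ratio V_{g-1,2}/V_g} the ratio converges to $\bigl(\sinh(\ell/2)/(\ell/2)\bigr)^{2}$ with error $O(\ell^3/g)$ uniformly on $[0,L]$, and dominated convergence yields
\[
\lim_{g\to\infty}\EWp(\cdots)=\tfrac12\int_0^\infty H_L(\ell)^2\Bigl(\frac{\sinh(\ell/2)}{\ell/2}\Bigr)^2\ell\,d\ell.
\]

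Next I would expand $H_L(\ell)=(\ell/L)\sum_{k\ge 1}F(k\ell)$ and square, turning the integral into a double sum over $(k,m)$. After cancelling the $\ell^2$ against $(\ell/2)^{-2}$, the typical summand is
\[
\frac{2}{L^2}\int_0^\infty \ell\,\frac{\sinh^2(\ell/2)}{\sinh(k\ell/2)\sinh(m\ell/2)}\,\hat f(k\ell/L)\hat f(m\ell/L)\cos(k\ell\tau)\cos(m\ell\tau)\,d\ell.
\]
The term $k=m=1$ is the only one where the $\sinh$ factors cancel exactly; writing $\cos^2(\ell\tau)=\tfrac12(1+\cos(2\ell\tau))$ and changing variables $u=\ell/L$, the non-oscillatory half produces
\[
\int_0^\infty u\,\hat f(u)^2\,du=\tfrac14\,\Sigma^2_{\mathrm{GOE}}(f),
\]
using that $\hat f$ is even. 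The oscillatory half $\int \ell\,\hat f(\ell/L)^2\cos(2\ell\tau)\,d\ell$ is handled by integrating by parts $A$ times against $\cos(2\ell\tau)$; since $\hat f\in C_c^\infty$ all boundary terms vanish and one gains $(\tau L)^{-A}$.

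The main obstacle, and the source of the $O(\log L/L^{2})$ error, is the bound on the remaining $(k,m)\neq(1,1)$ pairs. My plan is to use, for $k\ge 2$, the elementary inequality
\[
\frac{\sinh(\ell/2)}{\sinh(k\ell/2)}\le \min\!\Bigl(\tfrac{1}{k},\ C\,e^{-(k-1)\ell/2}\Bigr),
\]
which follows from $\sinh(k\ell/2)\ge k\sinh(\ell/2)$ for small $\ell$ and from the explicit exponential form for $\ell$ bounded away from zero. For the diagonal $k=m\ge 2$ one splits the integral at $\ell\sim 1/k$, getting $O\bigl(L^{-2}(k-1)^{-2}\bigr)$ per $k$ and summing to $O(L^{-2})$. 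For genuinely off-diagonal $k\neq m$ (say $1\le k<m$), the product of cosines splits into $\cos((m\pm k)\ell\tau)$; one either bounds absolutely, using the exponential decay from $\sinh(m\ell/2)^{-1}$, or integrates by parts against the oscillation, both routes yielding contributions summable to $O(\log L/L^2)$ (the logarithm absorbing the worst terms where $k$ is small and $m$ ranges over $[2,L]$). Assembling the main term, the $(\tau L)^{-A}$ contribution, and the $O(\log L/L^{2})$ remainder gives the claimed asymptotic.
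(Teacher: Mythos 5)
Your proposal is correct and follows essentially the same route as the paper: Mirzakhani's integration formula with $c(\gamma)=1/2$ and the volume ratios \eqref{ratio V_{g,2}(x,x)/V_g}, \eqref{ratio V_{g-1,2}/V_g} to pass to the limit, expansion of $H_L^2$ into a double sum over $(k_1,k_2)$, extraction of $\tfrac14\Sigma^2_{\rm GOE}(f)$ from the non-oscillatory half of the $(1,1)$ term with repeated integration by parts giving the $(\tau L)^{-A}$ error, and the same two elementary bounds $\sinh(y)/\sinh(ky)\le\min(1/k,\,2e^{-(k-1)y})$ with a split of the remaining sum at $k_1+k_2\asymp L^2$ to obtain $O(\log L/L^2)$. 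The constants (including the factor $2/L^2$ in the typical summand) check out against the paper's computation.
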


\begin{proof}
For the diagonal term, as in \S~\ref{sec:expectation}, with $H_L$ replaced by $H_L^2$, use \eqref{MIF one var} for $g>2$,  \eqref{ratio V_{g,2}(x,x)/V_g} and \eqref{ratio V_{g-1,2}/V_g} to find
(recall the sum is over non-oriented geodesics)  
\[
\begin{split}
 \lim_{g\to \infty}  \EWp \left(  \sum_{\gamma \;\; \rm{sns}}H_L\left(\ell_\gamma \right)^2 \right)  
& =  \frac 12    \int_0^\infty H_L(\ell)^2 \left( \frac{\sinh (\ell/2)}{\ell/2} \right)^2 \ell d\ell
\\
& = \frac 12     \sum_{k_1,k_2\geq 1}  I_L(k_1,k_2)
\end{split}
\]
where
\begin{multline*}
 I_L(k_1,k_2):=\frac 1{L^2} \int_0^\infty \ell^2 F(k_1 \ell) F(k_2 \ell)  \left( \frac{\sinh (\ell/2)}{\ell/2} \right)^2 \ell d\ell 
 \\
  = 
 \frac 4{L^2}  \int_0^\infty \ell  \^f\left( \frac{k_1 \ell}{L} \right)  \^f\left( \frac{k_2 \ell}{L} \right)  \frac{\sinh^2 (\ell/2)}{\sinh(k_1 \ell/2) \sinh(k_2 \ell/2)}\cos(\tau k_1\ell)\cos(\tau k_2\ell) d\ell  .
\\
\end{multline*}

For $k_1=k_2=1$ we obtain 
\begin{equation}
\begin{split}
I_L(1,1) &= \frac 4{L^2}  \int_0^\infty \ell  \^f\left( \frac{  \ell}{L} \right)^2    \frac{1+\cos(2\tau \ell)}{2}d\ell
\\
 &=
    2 \int_0^\infty x \^f(x)^2 dx + 2 \int_0^\infty x \^f(x)^2 \cos(2\tau L x)dx 
   \\
   & =
      \intinf |x| \^f(x)^2 dx +O(1/(\tau L)^2)
 \end{split}
 \end{equation}
on using integration by parts twice to bound  the second term   (recall $\tau>0$). 

Next we show that the sum over $k_1+k_2\geq 3$ is $O(\log L/L^2 )$, uniformly in $\tau$, as $L\to \infty$.  
 We use for $  k_2\geq  k_1\geq 1 $,
\begin{equation}\label{upper bound on I(k)}
| I_L(k_1,k_2) | \ll \int_0^{1/k_2}  x \frac{\sinh(xL/2)^2}{\sinh(k_1xL/2)\sinh(k_2xL/2)}  dx .
\end{equation}

For $k\geq 1$, we have for $y>0$
\begin{equation}\label{bound for ratio of sinh}
  \frac{\sinh(y)}{\sinh(ky)}   <  \frac{ 2}{e^{ (k-1)y}} 
\end{equation}
since
\begin{multline*}
\sinh(ky) = \sinh(y)\cosh((k-1)y) + \cosh(y)\sinh((k-1))y) 
\\
>  \sinh(y)\cosh((k-1)y)  >\frac 12 \sinh (y) e^{(k-1)y} .
\end{multline*}
Therefore
\begin{equation}\label{bound for small k}
\left| I_L(k_1,k_2) \right| \ll \int_0^\infty  x e^{-(k_1+k_2-2)Lx} dx  =\frac 1{(k_1+k_2-2)^2L^2} .
\end{equation}

We will  use \eqref{bound for small k} for $k_1+k_2< L^2$. 
For $k_1+k_2\geq  L^2$, we instead use in \eqref{upper bound on I(k)}
\[
\frac{\sinh(y)}{\sinh(ky)}\leq \frac 1k
\]
to obtain
\begin{equation}\label{bound for large k}
| I_L(k_1,k_2) | \ll  \int_0^{1/k_2}  x \frac 1{k_1k_2} dx \ll \frac 1{k_1k_2^3} . 
\end{equation}

Summing \eqref{bound for small k} over $k\geq 2$ gives a bound for the diagonal terms
\[
\sum_{k\geq 2} I_L(k,k) \ll \frac 1{L^2} .
\]

Next we bound the sum over $1\leq k_1<k_2$: We divide the sum into two pieces, one over $k_1+k_2<L^2$ and the second over $k_1+k_2\geq L^2$. 
For the sum over $k_1+k_2<L^2$, we use \eqref{bound for small k}:
\begin{equation*}
\begin{split}
\sum_{3\leq k_1+k_2<L^2} | I_L(k_1,k_2) | &\ll  \sum_{3\leq k_1+k_2<L^2}\frac 1{(k_1+k_2-2)^2L^2}
\\
& \ll \frac 1{L^2} \sum_{1\leq m\leq L^2} \frac 1{m^2} \#\{(k_1,k_2):k_1+k_2=m\} 
\\
&\ll \frac 1{L^2} \sum_{1\leq m\leq L^2}  \frac 1{m^2}  \cdot m \ll \frac{\log L}{L^2} .
\end{split}
\end{equation*}

For the sum over $k_1+k_2> L^2$, we use \eqref{bound for large k} to find
\begin{equation*}
\begin{split}
\sum_{k_1+k_2 > L^2} | I_L(k_1,k_2) | &\ll 
\sum_{  k_1+k_2>L^2}\frac 1{k_1 k_2^3} 
\\
&\ll \sum_{1\leq k_1 \leq L^2/2} \frac 1{k_1} \sum_{k_2>L^2-k_1} \frac 1{k_2^3}
+ \sum_{  k_1 > L^2/2} \frac 1{k_1} \sum_{k_2>k_1} \frac 1{k_2^3}
\\
&\ll \sum_{1\leq k_1 \leq L^2/2} \frac 1{k_1} \frac 1{(L^2-k_1)^2}  +  \sum_{  k_1 > L^2/2} \frac 1{k_1} \cdot  \frac 1{k_1^2}
\\
&\ll \frac{\log L}{L^4} + \frac 1{L^4}\ll \frac{\log L}{L^4} .
\end{split}
\end{equation*}
Altogether we find that as $g\to \infty$,
\begin{multline}\label{ans for diag}
  \EWp \left(  \sum_{\gamma \;\; \rm{sns}}H_L\left(\ell_\gamma \right)^2 \right)  \sim  \frac 12   \int_0^\infty H_L(\ell)^2 \left( \frac{\sinh (\ell/2)}{\ell/2} \right)^2 \ell d\ell
 \\
 = \frac 12    \intinf |x| \^f(x)^2 dx + O\left( \frac{\log L}{L^2} \right) ,
\end{multline}
which gives the result on recalling that $\Sigma^2_{\rm GOE}(f) = 2 \intinf |x| \^f(x)^2 dx $. 
\end{proof}


We now bound the contribution of the off-diagonal non-separating pairs $(\gamma,\gamma')$ with $\gamma, \gamma'$ being disjoint  simple   geodesics such that  $S_g\backslash (\gamma\cup \gamma')$ is connected:
\begin{lem}\label{lem:sns nondiag}
\[
\lim_{g\to \infty}\EWp \left( 4 \sum_{\substack{(\gamma,\gamma') \;\; \rm{sns} \\ \gamma\cap \gamma'=\emptyset}}H_L\left(\ell_\gamma\left(X \right) \right) 
H_L\left(\ell_{\gamma'}\left(X \right) \right) \right) =\If^2 .
\]
\end{lem}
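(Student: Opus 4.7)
The plan is to invoke Mirzakhani's multi-curve integration formula \eqref{gen M integ formula} with $\Gamma_0 = (\gamma_1,\gamma_2)$ a fixed ordered representative of a pair of disjoint simple closed curves on $S_g$ whose union has connected complement. The sum $\sum_{(\gamma,\gamma')\ \rm sns,\ \gamma\neq\gamma'} H_L(\ell_\gamma)H_L(\ell_{\gamma'})$ is precisely the geometric function $F_{\Gamma_0}$ attached to the test function $F(x,y)=H_L(x)H_L(y)$.

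First I would identify the topology of the cut: by additivity of Euler characteristic, cutting $S_g$ along $\gamma_1\cup\gamma_2$ produces a single connected surface of signature $(g-2,4)$, with boundary components appearing in pairs of equal length, so $V_g(\Gamma_0,\mathbf x)=V_{g-2,4}(x,x,y,y)$. Since the complement is connected and $k=2$, the constant in \eqref{gen M integ formula} is $c(\Gamma_0)=1/2^2=1/4$. Applying the formula and multiplying by $4$ cancels the $1/4$, giving
\begin{equation*}
\EWp\!\left(4\sum_{\substack{(\gamma,\gamma')\ \rm sns\\ \gamma\neq\gamma'}}H_L(\ell_\gamma)H_L(\ell_{\gamma'})\right)
=\int_0^\infty\!\!\int_0^\infty H_L(x)H_L(y)\,\frac{V_{g-2,4}(x,x,y,y)}{V_g}\,xy\,dx\,dy.
\end{equation*}

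Next, the large-genus asymptotic of the volume ratio follows from \eqref{ratio V_{g,2}(x,x)/V_g} applied with $n=4$, combined with two iterations of \eqref{ratio V_{g-1,2}/V_g} to pass from $V_g$ through $V_{g-1,2}$ to $V_{g-2,4}$:
\begin{equation*}
\frac{V_{g-2,4}(x,x,y,y)}{V_g}=\left(\frac{\sinh(x/2)}{x/2}\right)^{\!2}\!\left(\frac{\sinh(y/2)}{y/2}\right)^{\!2}\!\left(1+O\!\left(\tfrac{1+(x+y)x^2y^2}{g}\right)\right).
\end{equation*}
Since $\^f\in C_c^\infty$ with $\supp\^f\subset[-1,1]$, the function $H_L$ is bounded and supported in $[0,L]$, so the integrand is uniformly bounded in $g$ on the compact rectangle $[0,L]^2$. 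Dominated convergence allows passage to the limit inside the integral.

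The limit integral then factorizes:
\begin{equation*}
\lim_{g\to\infty}\EWp(\cdots)=\left(\int_0^\infty H_L(\ell)\left(\frac{\sinh(\ell/2)}{\ell/2}\right)^{\!2}\ell\,d\ell\right)^{\!2}=\If^2,
\end{equation*}
where the one-variable integral was identified with $\If$ in the proof of Proposition~\ref{prop:expectation}. The only subtle step is getting the topological and combinatorial data exactly right — the signature $(g-2,4)$, the matching of boundary lengths in pairs $(x,x,y,y)$, and the constant $c(\Gamma_0)=1/4$ for the ordered-pair orbit — after which the rest is a direct application of the volume asymptotics already established, with no genuine analytic obstacle.
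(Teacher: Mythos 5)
Your proposal is correct and follows essentially the same route as the paper: one topological type of pair, cut surface $S_{g-2,4}$ with boundary lengths $(x,x,y,y)$, constant $c=1/4$ cancelling the prefactor $4$, the volume-ratio asymptotics, and factorization of the double integral into $\If^2$. One small inaccuracy: $H_L$ is not bounded (it grows like $\log(1/x)$ as $x\to 0$, cf.\ Lemma~\ref{lem:bounds for HL}), but since the integrand carries a factor $xy$ the dominated convergence step still goes through with a logarithmically singular, integrable majorant.
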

\begin{proof}
There is a single topological type (i.e. orbit of the mapping class group) of such pairs, giving $S_g\backslash (\gamma\cup \gamma') = S_{g-2,4}$ a surface of genus $g-2$ with two pairs of equal length boundary geodesics, see  e.g.  \cite[\S 1.3.1]{FM} and 
Figure~\ref{fig:two bdry}.   
Mirzakhani's integration formula \eqref{gen M integ formula} gives, 
\begin{multline*}
\EWp \left( \sum_{\substack{(\gamma,\gamma') \;\; \rm{sns} \\ \gamma\cap \gamma'=\emptyset}}H_L\left(\ell_\gamma  \right) 
H_L\left(\ell_{\gamma'} \right) \right)   
\\
= \frac 1{2^2} \int_0^\infty \int_0^\infty H_L(x)H_L(y) \frac{V_{g-2,4}(x,x,y,y)}{V_g}\cdot  xdx \cdot ydy .
\end{multline*}

Using \eqref{ratio V_{g,2}(x,x)/V_g} and \eqref{ratio V_{g-1,2}/V_g} we have  
\[
\begin{split}
 \frac{V_{g-2,4}(x,x,y,y)}{V_g}  =  \frac{V_{g-2,4}(x,x,y,y)}{V_{g-2,4}} \frac{V_{g-2,4}}{V_{g-1,2}} \frac{V_{g-1,2}}{V_g}
 \\
= \left( \frac{\sinh(x/2)}{x/2}\frac{\sinh(y/2)}{y/2}\right)^2 \left( 1+ \frac{ (x+y)x^2y^2 }{g} \right) .
\end{split}
 \]
 
Since $H_L(x)$ is compactly supported, and satisfies \eqref{HL near 0} near $x=0$, we may pass to the limit $g\to \infty$ and obtain
 \begin{multline*}
\lim_{g\to \infty}\EWp \left( \sum_{\substack{(\gamma,\gamma') \;\; \rm{sns} \\ \gamma\cap \gamma'=\emptyset}}H_L\left(\ell_\gamma  \right) 
H_L\left(\ell_{\gamma'} \right) \right) 
\\
=\lim_{g\to \infty}  \frac 1{2^2} \int_0^\infty \int_0^\infty H_L(x)H_L(y) \frac{V_{g-2,4}(x,x,y,y)}{V_g}\cdot  xdx \cdot ydy
\\
= \frac 14 \int_0^\infty\int_0^\infty H_L(x)H_L(y) \left( \frac{\sinh(x/2)}{x/2}\frac{\sinh(y/2)}{y/2}\right)^2 xdx \cdot ydy
\\
=  \frac 14  \left(  \int_0^\infty H_L(x)\left( \frac{\sinh(x/2)}{x/2} \right)^2 xdx \right)^2  =\frac 14 \If^2 
\end{multline*}
as claimed. 
\end{proof}

\begin{figure}[ht]
\begin{center}
\includegraphics[height=50mm, width=120mm]{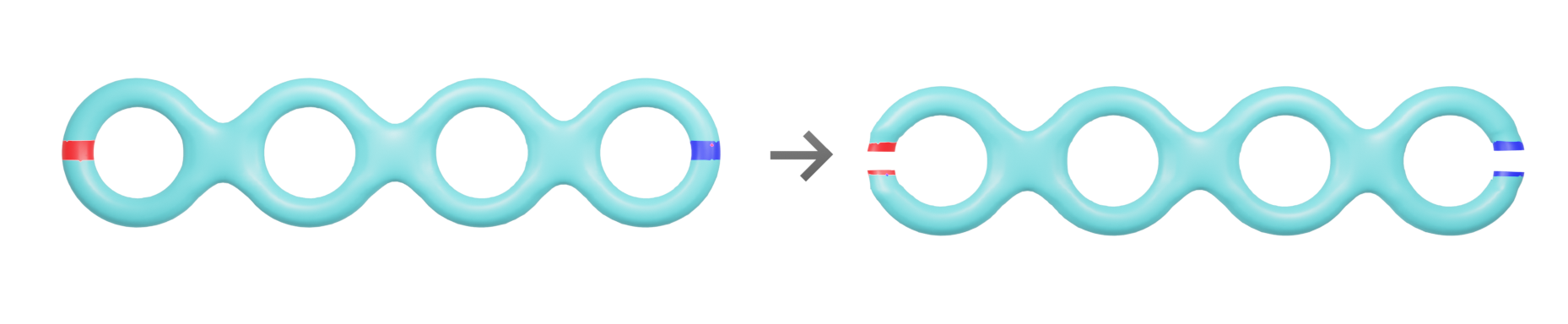}
\caption{ A genus 4 surface $S_{4,0}$ cut by a non-separating bi-curve to form a surface $S_{2,4}$ of genus two with two pairs of boundary components.}
\label{fig:two bdry}
\end{center}
\end{figure}

\subsection{Separating pairs}
The term $N_{{\rm SSep},2}$ is the sum
\[
\begin{split}
  N_{{\rm SSep},2}  &=4\sum_{\gamma \; {\rm separating}}  H_L(\gamma)^2  +
   4\sum_{\substack{\gamma \cup \gamma' \; {\rm separating} \\ \gamma\cap \gamma'=\emptyset}} H_L(\ell_\gamma(X)) H_L(\ell_{\gamma'}(X))
   \\ &=:D +O .
\end{split}
\]
where the first sum is over simple geodesics $\gamma$ so that $S_g\backslash \gamma$ is disconnected, and the second sum over pairs $(\gamma,\gamma')$ of disjoint simple geodesics so that $S_g\backslash \gamma\cup \gamma'$ is disconnected. 
We now show that the contribution of separating pairs is negligible: 
\begin{prop}\label{prop:separating pairs} 
For $L>1$,
\[
  \EWp( N_{{\rm SSep},2}  )   \ll_{L,\tau} \frac 1g  .
\]
\end{prop}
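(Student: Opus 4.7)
The plan is to apply Mirzakhani's integration formula separately to the diagonal piece $D$ and off-diagonal piece $O$ of $\tfrac{1}{4} N_{{\rm SSep},2} = D + O$, and in each case reduce to the volume-sum bound \eqref{Gen bound on sum of vols}. Two preliminary observations: first, since $\supp \^f \subseteq [-1,1]$, every summand in the definition of $H_L(x)$ vanishes unless $kx \leq L$, so $H_L$ is supported on $[0,L]$ and is uniformly $O_L(1)$ there; second, for $\ell_j \in [0,L]$ the ratios $V_{g_i,b_i}(\vec \ell)/V_{g_i,b_i}$ are bounded by an $L$-dependent constant uniformly in $(g_i,b_i)$: for large $g_i$ this follows from \eqref{ratio V_{g,2}(x,x)/V_g}, and the finitely many small-$g_i$ exceptions are controlled by the explicit polynomial form of $V_{g_i,b_i}(\vec \ell)$ in the boundary lengths.

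For the diagonal $D$: simple separating geodesics on $S_g$ fall into $\lfloor g/2 \rfloor$ mapping-class-group orbits indexed by $i=1,\dots,\lfloor g/2 \rfloor$, where $\gamma_i$ cuts $S_g$ into pieces of signatures $(i,1)$ and $(g-i,1)$. Mirzakhani's formula \eqref{MIF one var} gives
\begin{equation*}
\EWp(D) = \sum_{i=1}^{\lfloor g/2 \rfloor} c(\gamma_i) \int_0^L H_L(\ell)^2 \frac{V_{i,1}(\ell)\, V_{g-i,1}(\ell)}{V_g} \, \ell \, d\ell,
\end{equation*}
which by the preliminary observations is $\ll_L V_g^{-1} \sum_{i} V_{i,1} V_{g-i,1}$. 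Applying \eqref{Gen bound on sum of vols} with $K=1$, $q=2$ yields $\EWp(D) \ll_L 1/g$.

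For the off-diagonal $O$: disjoint distinct pairs $(\gamma, \gamma')$ with $S_g \setminus (\gamma \cup \gamma')$ disconnected fall into finitely many topological types, each decomposing $S_g$ into $q \geq 2$ pieces with $\sum_i b_i = 4$. Applying \eqref{gen M integ formula} (with $k=2$), together with the uniform volume-ratio bound above, gives
\begin{equation*}
\EWp(O) \ll_L \frac{1}{V_g} \sum_{\text{topol.\ types}} \prod_{i=1}^q V_{g_i, b_i},
\end{equation*}
which by \eqref{Gen bound on sum of vols} is $\ll 1/g^{q-1} \leq 1/g$. Combining gives $\EWp(N_{{\rm SSep},2}) \ll_L 1/g$; the same argument with $H_L^2$ replaced by $H_L$ yields the first-moment bound $\EWp(N_{\rm SSep}) \ll_L 1/g$ referenced as \eqref{first moment of N_{Ssep}} and used in the proof of Proposition~\ref{prop:expectation}. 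The only real obstacle is the uniform $L$-dependent control of the volume ratios $V_{g_i,b_i}(\vec\ell)/V_{g_i,b_i}$: the asymptotic \eqref{ratio V_{g,2}(x,x)/V_g} has an error of order $\ell^3/g_i$ that is ineffective when $g_i$ is bounded, but only finitely many such small-$g_i$ cases arise and each is handled directly via the polynomial dependence of the volumes on boundary lengths.
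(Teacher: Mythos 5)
Your argument follows the paper's proof essentially step for step: the same split $\tfrac14 N_{{\rm SSep},2}=D+O$, Mirzakhani's formula applied orbit by orbit, the volume ratios \eqref{ratio V_{g,2}(x,x)/V_g} to peel off the $\ell$-dependence, and \eqref{Gen bound on sum of vols} to get the $1/g$ decay from the sum of volume products. Your remark about the small-$g_i$ pieces (where the error term in \eqref{ratio V_{g,2}(x,x)/V_g} is not small but is still $O_L(1)$, or can be checked from the explicit volume polynomials) is a point the paper itself glosses over, and it is handled correctly.

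There is, however, one false intermediate claim: $H_L$ is \emph{not} uniformly $O_L(1)$ on $[0,L]$. The support statement is right (if $x>L$ then $\^f(kx/L)=0$ for all $k\geq 1$), but near $x=0$ roughly $L/x$ terms survive, each of size about $\tfrac{x}{L}\cdot\tfrac{2}{kx}=\tfrac{2}{Lk}$, so $H_L(x)\asymp \tfrac{2\^f(0)}{L}\log(L/x)\to\infty$ as $x\to 0^+$ whenever $\^f(0)\neq 0$; this is exactly the content of Lemma~\ref{lem:bounds for HL}(i). The step where you bound the integral by pulling $H_L^2$ out as a constant therefore does not stand as written. The gap is easily repaired: $(\log(1/\ell))^2\,\ell$ is integrable at $0$, so $\int_0^L H_L(\ell)^2\left(\tfrac{\sinh(\ell/2)}{\ell/2}\right)^2\ell\,d\ell\ll_L 1$ still holds (the paper gets this by comparison with the explicit computation \eqref{ans for diag}), and likewise for the double integral in the off-diagonal case, where the paper instead dominates $|H_L|$ by the majorant $G_L$ and checks $I(L)<\infty$. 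With that one fix your proof is complete and coincides with the paper's.
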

\begin{proof}

For the diagonal case, we collect together the separating geodesics of type $i$, that is $S_g\backslash \gamma =S_{i,1} \cup S_{g-i,1}$ 
is a union of two surfaces of genera $i\geq 1$ and $g-i\geq 1$, each having one boundary component (both of the same length),  for $1\leq i\leq \lfloor g/2 \rfloor$. Mirzakhani's integration formula gives  
\[
\EWp(D ) =4\sum_{i=1}^{\lfloor g/2 \rfloor} c_i \frac 1{V_g}\int_{0}^{ \infty } H_L(\ell)^2   V_{i,1}(\ell) V_{g-i,1}(\ell) \ell d\ell   
\]
for $0\leq c_i\leq 1$. 
 
We have
\[
\frac{V_{i,1}(\ell) V_{g-i,1}(\ell)}{V_g} = \frac{V_{i,1}(\ell)}{V_{i,1}}\frac{ V_{g-i,1}(\ell)}{V_{g-i,1}} \frac{V_{i,1}V_{g-i,1}} {V_g} .
\]
By   \eqref{ratio V_{g,2}(x,x)/V_g}, for $0\leq \ell \leq L$,  and $L>1$,  
\[
 \frac{V_{j,1}(\ell)}{V_{j,1}}   =  \frac{\sinh(\ell/2)}{\ell/2} \left(1+O\left(\frac{\ell^2}{j} \right) \right) \ll \frac{\sinh(\ell/2)}{\ell/2}  L^2 
\]
and for $g\geq 4$ (which we may assume), by \eqref{Gen bound on sum of vols} 
\[
\sum_{i=1}^{g-1} \frac{V_{i,1}V_{g-i,1}} {V_g}   \ll \frac 1{g} .
\]
Therefore for $0\leq \ell \leq L$, $L>1$, 
\[
\sum_{i=1}^{\lfloor g/2 \rfloor} \frac{V_{i,1}(\ell) V_{g-i,1}(\ell)}{V_g} \ll \frac {L^4}{g}   \left( \frac{\sinh(\ell/2)}{\ell/2} \right)^2 .
\]
Hence 
\[
  \EWp(D) \ll_L \frac 1g \int_0^\infty H_L(\ell)^2   \left( \frac{\sinh(\ell/2)}{\ell/2} \right)^2 \ell d\ell .
\]
Comparing with \eqref{ans for diag} gives
\[
  \EWp(D)\ll_{L,\tau} \frac 1g .
\]

The same consideration works for the first moment of $N_{\rm SSep}$ and gives
\begin{equation}\label{first moment of N_{Ssep}}
\begin{split}
\EWp(N_{\rm SSep}) &=  \sum_{i=1}^{\lfloor g/2 \rfloor} c_i \frac 1{V_g}\int_{0}^{ \infty } H_L(\ell)    V_{i,1}(\ell) V_{g-i,1}(\ell) \ell d\ell   
\\
&\ll_L \frac 1g \int_0^\infty | H_L(\ell)|   \left( \frac{\sinh(\ell/2)}{\ell/2} \right)^2 \ell d\ell \ll_{L,\tau} \frac 1g 
\end{split}
\end{equation}
 proving \eqref{orig first moment of N_{Ssep}}.

 To treat  the off-diagonal pairs,  we break them up according to their topological type, that is the orbit under the mapping class group.
This is determined by the topology of the complement $S_g\backslash \gamma\cup \gamma'$, 
firstly by the number $q\geq 2$ of connected components, and then by the topological type of the possible components: 
$S_g\backslash \gamma\cup \gamma'=\cup_{i=1}^q S_{g_i,b_i}$ where $S_{g_i,b_i}$  of genus $g_i$ and having $b_i \geq 1$ boundary components. 

Necessarily the total number of boundary components is $\sum_{i=1}^q b_i=4$, so that $q\in \{2,3,4\}$.   We cannot have all $b_i=1$, since having $4$ pieces with 
one  boundary component each would impose their gluing along the boundaries to result in two disconnected
surfaces, while $S_g$ is  connected, so necessarily $q\in \{2,3\}$. 
For $q=2$ we have either $b_1=1$, $b_2=3$ or $b_1=b_2=2$. For $q=3$ we must have $b_1=b_2=1$, $b_3=2$. 
Moreover,  
by the additivity of the Euler characteristic, we have
\[
2-2g = \sum_{i=1}^q (2-2g_i-b_i)
\]
giving 
\[
\sum_{i=1}^q g_i  = g+q-3 .
\] 
For instance, for  $q=3$, when  $S_g \backslash \gamma\cup \gamma' = S_{i,1}(x)\cup S_{j,2}(x,y)\cup S_{k,1}(y)$ is a union of three surfaces with  matching boundary components as in Figure~\ref{fig:Three pieces}, of genera $i,j,k\geq 1$ adding up to $g$: $i+j+k=g$. 
\begin{figure}[ht]
\begin{center}
\includegraphics[height=50mm, width=120mm]{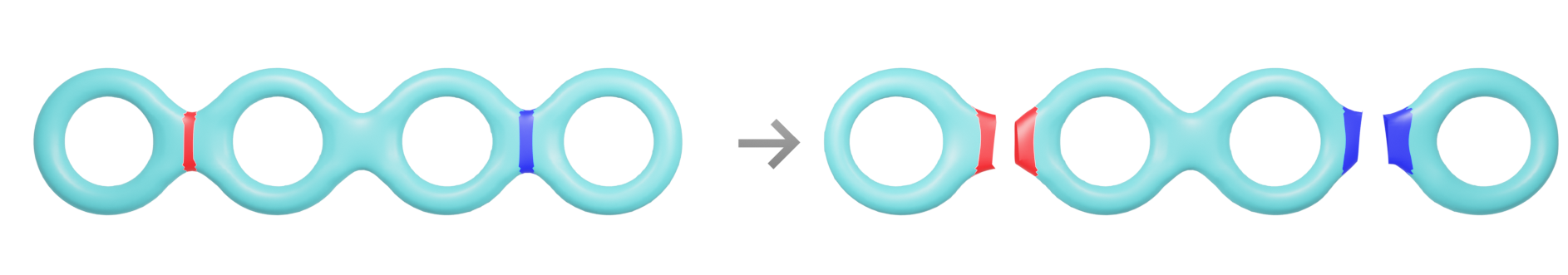}
\caption{ A genus 4 surface $S_{4,0}$ cut by a pair of nonisotopic  separating curves to form three surfaces with boundary $S_{1,1}$, $S_{2,2}$ and $S_{1,1}$.}
\label{fig:Three pieces}
\end{center}
\end{figure}

 To bound the contribution   of a given orbit $\mathcal O$ of pairs $(\gamma,\gamma')$, we use \eqref{gen M integ formula} to obtain, for $0< c(\mathcal O)\leq 1$, 
 \begin{multline}\label{int over orbit}
 \EWp \left(\sum_{(\gamma,\gamma')\in \mathcal O} H_L(\ell_\gamma)H_L(\ell_\gamma') \right)  
 \\ = 
 \frac {c(\mathcal O)}{V_g} \iint H_L(x) H_L(y) \prod_{i=1}^{q(\mathcal O)} V_{g_i,b_i}( \vec x) xdx ydy 
 \end{multline}
 where $\vec x$ is the vector of lengths of the boundary components $b_i$, two being equal to $x$ and the other two being equal to $y$, and $q(\mathcal O)=2$ or $3$. 
 For instance, when $q=3$ so that $b_1=1$, $b_2=2$ and $b_3=1$  as in Figure~\ref{fig:Three pieces}, then 
 \[
 S_g\backslash \gamma\cup \gamma' = S_{g_1,1}(x)\cup S_{g_2,2}(x,y) \cup S_{g_3,1}(y)
 \] 
 and the factor $\prod_{i=1}^q V_{g_i,b_i}( \vec x) $ is $V_{g_1,1}(x)V_{g_2,2}(x,y) V_{g_3,1}(y)$. 

  For the general upper bound, we use \eqref{ratio V_{g,2}(x,x)/V_g} in \eqref{int over orbit} to replace 
 \[
  \prod_{i=1}^{q(\mathcal O)} V_{g_i,b_i}( \vec x) = \prod_{i=1}^{q(\mathcal O)} V_{g_i,b_i}  \cdot \prod_{j=1}^4\frac{\sinh(x_j/2)}{x_j/2}\left(1+O\left(\frac{\sum_{j=1}^4 x_j \prod_{j=1}^4 x_j }{g_i} \right) \right) .
  \]
  Now note that  two of the $x_j$ equal $x$ and the other two equal $y$, and that the range of integration is for $0\leq x,y\leq L$ since $H_L(x)$ vanishes when $x>L$. Hence we may bound the integrand in 
  \eqref{int over orbit} by 
  \[
  \ll |H_L(x) H_L(y)|  \cdot \prod_{i=1}^{q(\mathcal O)} V_{g_i,b_i}  \cdot   \left( 1 + O\left(\frac{L^5}{g_i} \right) \right)^4.  
  \]
  Since $H_L(x)$ is integrable at $x=0$, and smooth in $(0,L]$  (Lemma~\ref{lem:bounds for HL}), we find
  \[
  \EWp \left(\sum_{(\gamma,\gamma')\in \mathcal O} H_L(\ell_\gamma)H_L(\ell_\gamma') \right)  \ll_L  \frac 1{V_g}\prod_{i=1}^{q(\mathcal O)} V_{g_i,b_i} .
  \]
  
  Summing over all orbits $\mathcal O$ we obtain a bound of 
       \[
\sum_{\mathcal O} \EWp \left(\sum_{(\gamma,\gamma') \in \mathcal O} H_L(\ell_\gamma)H_L(\ell_\gamma') \right)  \ll_L \frac 1{V_g} \sum_{\mathcal O}  \prod_{i=1}^{q(\mathcal O)} V_{g_i,b_i}  .
 \]
On using \eqref{Gen bound on sum of vols}, this is $\ll_L 1/g$, so we obtain
\[
\EWp(O)
 = \EWp\left(4 \sum_{\substack{ \gamma\cup \gamma' \;{\rm separating}\\ \gamma\cap \gamma'=\emptyset}} 
H_L(\ell_\gamma(X)) H_L(\ell_{\gamma'}(X)) \right) 
\ll_L \frac 1g .
\]
Therefore we showed
\[
\EWp(N_{SSep}^2)\ll_L \frac 1g .
\]
 %
%
 \end{proof}

\section{Bounding the non-simple case}\label{sec:nonsimple}

 \subsection{A collar lemma and its applications} 
We first recall that there is a uniform lower bound on the length of a non-simple geodesic, and a lower bound on the length of any closed geodesic intersecting a simple short geodesic:

\begin{lem}\label{lem:Basmajian}
i) Any non-simple geodesic has length at least $4\arcsinh(1)=3.52\dots$. 

ii) Let $\gamma, \gamma'$ be a pair of distinct intersecting closed geodesics, having lengths $\ell=\ell_\gamma$ and $\ell'=\ell_{\gamma'}$, with $\gamma$ simple. If $\ell \leq  1/2$ then 
\[
\ell' >2\log \coth(\frac \ell 4) 
\]
so that $\ell'  \gg \log \frac 1\ell $ as $\ell \searrow 0$. 
\end{lem}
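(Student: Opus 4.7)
Both assertions are standard consequences of the thick--thin decomposition of hyperbolic surfaces, and I would derive each from the appropriate collar-type statement.

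For part (i), the plan is to exploit the self-intersection. If $\alpha$ is a non-simple closed geodesic with self-intersection point $p$, then lifting to $\mathbb{H}^2$ gives two distinct geodesic axes (the lift $\tilde\alpha$ and a translate $g\tilde\alpha$ by some nontrivial $g\in \pi_1(X)$) that cross at a lift of $p$. The two hyperbolic isometries $A$ (a generator of the cyclic stabilizer of $\tilde\alpha$) and $gAg^{-1}$ both have translation length $\ell_\alpha$ and intersecting axes. By discreteness of $\pi_1(X)$ acting on $\mathbb{H}^2$ (e.g.\ via J\o rgensen's inequality, or a direct hyperbolic-trigonometry argument on the two axes), there is a universal lower bound $c_0 > 0$ on $\ell_\alpha$, independent of the surface. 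Hence $\ell_\alpha \geq c_0$ for every non-simple closed geodesic on every hyperbolic surface.

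For part (ii), I would apply the Keen--Matelski collar lemma: the simple closed geodesic $\gamma$ of length $\ell$ is the core of an embedded annular collar $C(\gamma) \subset X$ of half-width
\[
w(\ell) = \arcsinh\!\left( \frac{1}{\sinh(\ell/2)} \right),
\]
so that $w(\ell) = \log(2/\ell) + O(1)$ as $\ell \to 0$. The other geodesic $\gamma'$ meets $\gamma$ at some point $p$ which lies on the core $\gamma$, at distance exactly $w(\ell)$ from each boundary component of the collar. Following $\gamma'$ from $p$ forward and backward, each arc is a length-minimizing geodesic segment inside $C(\gamma)$ and must therefore travel distance at least $w(\ell)$ before exiting the collar. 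The portion of $\gamma'$ inside $C(\gamma)$ containing $p$ consequently has length $\geq 2w(\ell)$, which gives $\ell' \geq 2w(\ell) \gg \log(1/\ell)$ as required.

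The main obstacle is mostly conceptual bookkeeping rather than a hard estimate: for (i), one must check that the constant $c_0$ is genuinely universal (independent of $g$), which is automatic because J\o rgensen/Margulis are statements about $\mathbb{H}^2$ and not about any particular quotient; for (ii), one must verify that the collar-traversal argument still works when $\gamma'$ is non-simple, which it does because the bound uses only the local geodesic (length-minimizing) property of $\gamma'$ inside $C(\gamma)$ and not any global injectivity of $\gamma'$.
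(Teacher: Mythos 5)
Your proof is correct in substance but takes a more self-contained route than the paper, which simply quotes Buser for both facts: for (i) the paper cites \cite[Ch.~4 \S 2]{Buser} for the sharp bound $\ell \geq 4\arcsinh(1)$, and for (ii) it cites \cite[Corollary 4.1.2]{Buser} for the inequality $\sinh(\ell/2)\sinh(\ell'/2)>1$ and then just computes $\ell' > 2\arcsinh\bigl(1/\sinh(\ell/2)\bigr) = 2\log\coth(\ell/4) \gg \log(1/\ell)$. Your collar-traversal argument for (ii) is in fact a proof of that cited corollary: $\gamma'$ passes through a point of the core and (since a closed geodesic other than the core cannot lie entirely inside the embedded annular collar) through a point at distance $w(\ell)=\arcsinh(1/\sinh(\ell/2))$ from the core, whence $\ell'\geq 2w(\ell)$, which is exactly $\sinh(\ell'/2)\geq 1/\sinh(\ell/2)$; you should say a word about why $\gamma'$ must exit the collar, but that is routine. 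The one place where your write-up stops short of a proof is part (i): the assertion that two conjugate hyperbolic elements of equal translation length with crossing axes in a discrete torsion-free group force a universal lower bound on that length is precisely the content of the lemma, and you defer it to ``J\o rgensen or a direct trigonometry argument'' without carrying it out; this is the standard argument (and is what Buser does, via the figure-eight configuration at the self-intersection), so it is not a wrong step, but as written the quantitative heart of (i) is asserted rather than proved. What your approach buys is independence from the specific reference and an explicit geometric mechanism (collar width) behind the $\log(1/\ell)$ growth; what the paper's approach buys is brevity and the sharp constants.
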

\begin{proof}
 i) There is a uniform lower bound on the length of a non-simple geodesic, in fact the length is at least $\ell_{\min}=4\arcsinh(1)=3.52\dots$ (and this is sharp), see \cite[Ch. 4 \S 2]{Buser}. 

ii) We use \cite[Corollary 4.1.2]{Buser} which says that in this situation, where the intersections are guaranteed to be transversal as the geodesics are distinct, we have
\[
\sinh(\ell/2) \cdot \sinh (\ell'/2)>1 .
\]
Hence 
\[
\ell'>2\sinh^{-1}( \frac 1{\sinh(\ell/2) })  = 2\log \coth(\ell/4) =2 \log \left(\frac 4\ell\right) +O(\ell^2) .
\]
\end{proof}

We claim that for any pair of intersecting geodesics $(\gamma,\gamma')$, we have a uniform upper bound on the product $|H_L(\ell_{\gamma})| \cdot |H_L(\ell_{\gamma'})|$. 


\begin{prop}\label{prop:bound non-simple pair}
i) Let $\gamma$ be a non-simple geodesic. Then 
\[
|H_L(\ell_\gamma)|\ll \frac 1L  \mathbf 1_{[0,L]}(\ell_\gamma).
\] 
Hence if both $\gamma$ and $\gamma'$ are non-simple closed geodesics, then there is a uniform bound
\[
|H_L(\ell_{\gamma}) \cdot  H_L(\ell_{\gamma'}) | \ll   \frac 1{L^2}\mathbf 1_{[0,L]}(\ell_{\gamma})\cdot  \mathbf 1_{[0,L]}(\ell_{\gamma'}) .
\]

ii) Let  $(\gamma,\gamma')$ be pair of intersecting closed geodesics on a hyperbolic surface, at least one of them simple. Then 
\[
|H_L(\ell_{\gamma}) \cdot  H_L(\ell_{\gamma'}) | \ll_L \mathbf 1_{[0,L]}(\ell_{\gamma})\cdot  \mathbf 1_{[0,L]}(\ell_{\gamma'})  .
\]
\end{prop}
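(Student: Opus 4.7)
The plan is to combine the two preceding lemmas: Lemma~\ref{lem:Basmajian} converts the geometric hypotheses (non-simplicity, or intersection with a short simple curve) into quantitative lower bounds on geodesic lengths, and Lemma~\ref{lem:bounds for HL} turns those length bounds into pointwise control on $H_L$. A third, elementary observation is needed: since $\supp\^f\subset[-1,1]$, every term in the defining series for $H_L(x)$ vanishes once $kx/L>1$ for all $k\geq 1$, so $H_L(x)=0$ for $x>L$. This supplies the $\mathbf 1_{[0,L]}$ cutoff throughout.

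For part (i), Lemma~\ref{lem:Basmajian}(i) places every non-simple geodesic in the range $\ell_\gamma\geq \ell_{\min}=4\arcsinh(1)>1$. In this range Lemma~\ref{lem:bounds for HL}(ii) gives $|H_L(\ell_\gamma)|\ll_L \ell_\gamma e^{-\ell_\gamma/2}$, which is uniformly bounded on $[\ell_{\min},\infty)$. Together with the cutoff at $\ell_\gamma>L$, this yields $|H_L(\ell_\gamma)|\ll_L \mathbf 1_{[0,L]}(\ell_\gamma)$, and the two-factor bound follows by multiplication.

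For part (ii), assume $\gamma$ is the simple geodesic and split by whether each length lies above or below the threshold $1/2$. In the ``both moderate'' regime $\ell_\gamma,\ell_{\gamma'}\in[1/2,L]$, a direct estimate using $\sinh(kx/2)\geq \sinh(k/4)$ and the convergence of $\sum_{k\geq 1} 1/\sinh(k/4)$ shows $|H_L(x)|\ll 1$ uniformly, so the product is $\ll 1$. The substantive case is $\ell_\gamma<1/2$, in which the collar estimate (Lemma~\ref{lem:Basmajian}(ii)) forces
\[
\ell_{\gamma'}\geq 2\log(4/\ell_\gamma)+O(\ell_\gamma^2).
\]
Applying Lemma~\ref{lem:bounds for HL}(i) to $\ell_\gamma$ and Lemma~\ref{lem:bounds for HL}(ii) to $\ell_{\gamma'}$ gives
\[
|H_L(\ell_\gamma)H_L(\ell_{\gamma'})|\ll_L \log(1/\ell_\gamma)\cdot \ell_{\gamma'} e^{-\ell_{\gamma'}/2}.
\]
Since $y\mapsto y e^{-y/2}$ is decreasing for $y\geq 2$, the collar lower bound on $\ell_{\gamma'}$ translates into $\ell_{\gamma'}e^{-\ell_{\gamma'}/2}\ll \ell_\gamma\log(1/\ell_\gamma)$, so the product is $\ll_L \ell_\gamma(\log(1/\ell_\gamma))^2\ll 1$ as $\ell_\gamma\to 0$.

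The main subtlety is the tension inside part (ii) when $\ell_\gamma\to 0$: the factor $|H_L(\ell_\gamma)|$ genuinely diverges logarithmically, and what rescues the bound is precisely the collar lemma's logarithmic lower bound on $\ell_{\gamma'}$, which the exponential decay in Lemma~\ref{lem:bounds for HL}(ii) then converts into a polynomial gain in $\ell_\gamma$. Once this exponential-over-logarithmic comparison is set up, the rest of the argument is a straightforward case split; no further input about the surface geometry is required, and the bounds are uniform in $g$.
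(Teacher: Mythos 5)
Your proof is correct and follows essentially the same route as the paper: part (i) combines Lemma~\ref{lem:Basmajian}(i) with Lemma~\ref{lem:bounds for HL}(ii) plus the support cutoff, and part (ii) splits at length $1/2$ and uses the collar lemma's logarithmic lower bound on $\ell'$ to convert the exponential decay of $H_L(\ell')$ into a polynomial gain that beats the $\log(1/\ell)$ divergence of $H_L(\ell)$. The only (immaterial) difference is that you retain the sharper bound $\ell'\geq 2\log(4/\ell)$ and obtain $\ell\,(\log(1/\ell))^2$, whereas the paper uses $\ell'\gg\log(1/\ell)$ and gets $\sqrt{\ell}\,(\log(1/\ell))^2$; both are $O(1)$.
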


\begin{proof}
i) If $\gamma$ is non-simple, then by Lemma~\ref{lem:Basmajian}(i) we have $\ell_\gamma\gg 1$, 
and by Lemma~\ref{lem:bounds for HL}(ii) we deduce $|H_L(\ell_\gamma)|\ll_L \mathbf 1_{[0,L]}$. 


ii) Now assume that $\gamma$ is simple of length $\ell=\ell_{\gamma}$, and that $\gamma'\neq \gamma$ intersects $\gamma$, and denote its length by $\ell'=\ell_{\gamma'}$. If $\gamma'$ is also simple then assume, as we may, that $\ell\leq \ell'$. 

If $\ell>1/2$ then if $\gamma'$ is also simple then also $\ell'\geq \ell >1/2$, while if  $\gamma'$ is non-simple then by 
Lemma~\ref{lem:Basmajian}(i) we still have $\ell'\gg 1$, and so in both these cases we have individual bounds $|H_L(\ell)|\ll 1$ and $|H_L(\ell')|\ll 1$ by part (i), so the product is bounded.

If $\ell <1/2$  then by Lemma~\ref{lem:Basmajian}(ii) we have $\ell'>4$, so that by Lemma~\ref{lem:bounds for HL}(ii) we have 
$|H_L(\ell')|  \ll  \ell' e^{-\ell'/2}$. Furthermore,   by Lemma~\ref{lem:Basmajian}(ii)  we know 
  $\ell' >2\log \coth(\frac \ell 4)$, and since $\ell' e^{-\ell'/2}$ is decreasing for $\ell'>4$ we have 
\[
\begin{split}
|H_L(\ell')| & \ll  \ell' e^{-\ell'/2} < 2\log \coth(\frac \ell 4) \exp( - \log \coth(\frac \ell 4)) 
\\
&= \frac{2\log \coth(\frac \ell 4) }{ \coth(\frac \ell 4)} \sim \frac \ell 2 \log \frac 4 \ell
\end{split}
\]
as $\ell \to 0$, and since $\ell <1/2$ then by Lemma~\ref{lem:bounds for HL}(i), $|H_L(\ell)|\ll \log(1/\ell)$ and we find
\[
|H_L(\ell ) \cdot  H_L(\ell') | \ll \ell ( \log \frac 1 \ell )^2  \leq \left(\frac 2 e \right)^2 =O(1) 
\]
 (using $\ell<1$). 
\end{proof}

 \subsection{Bounding $\EWp(N') $}
 \label{sec:boundN' and nonsimplepairs}

We first prove the bound   \eqref{orig expect N'} on the expected value of $N'$: from Proposition~\ref{prop:bound non-simple pair}(i) we reduce to bounding the expected number of non-simple closed geodesics:  
\begin{equation}\label{expect N'}
\begin{split}
\EWp(N') & \leq \EWp\left(\sum_{\gamma \;{\rm non\; simple}} |H_L(\ell_\gamma)| \right)
\\
&\ll_L \EWp\left( \sum_{\gamma \;{\rm non\; simple}} 1\right)  \ll \frac 1g 
\end{split}
\end{equation}
by \cite[Proposition 4.5]{MP}, proving \eqref{orig expect N'}.

\subsection{Bounding $N''$}  
We consider the contribution $N''$ of all pairs of closed geodesics $(\gamma,\gamma')$ which form a non-simple pair, which means that 
at least one of the following (possibly overlapping) conditions hold: 
\begin{itemize}
\item The diagonal case: $\gamma=\gamma'$ is not simple.  
\item Both geodesics are non simple and distinct.
\item the geodesics are distinct $\gamma\neq \gamma'$ but intersect (possibly both are simple). 
\item the geodesics are disjoint $\gamma\cap \gamma'=\emptyset$, one is simple and the other is non-simple.

\end{itemize}
Then 
\[
|N''|\leq  \sum_{\gamma\; {\rm non \; simple}} |H_L(\ell_\gamma)|^2 
+ \sum_{ (\gamma,\gamma') \; {\rm non-simple\; pair} } 
 | H_L(\ell_{\gamma})H_L(\ell_{\gamma'})| .
\]
The bound on the contribution to $\EWp(N'')$ of diagonal pairs $\gamma=\gamma'$ non-simple is exactly as in the bound for $\EWp(N')$, 
see \eqref{expect N'}.  

We will show that the expected value of the contribution of non-diagonal non-simple pairs is bounded by 
\begin{prop}\label{prop:expected sum nonsimple pairs}
Fix $\tau>0$, $L>1$. Then 
\[
\EWp\left(  \sum_{\substack{\gamma\neq \gamma'\\ (\gamma,\gamma') \; {\rm non-simple\; pair}}} 
 | H_L(\ell_{\gamma})H_L(\ell_{\gamma'})|  \right) \ll_{L,\tau} \frac 1{\sqrt{g}}. 
\]
\end{prop}

 We bound the expected value of the sum over pairs of distinct  geodesics $\gamma\neq \gamma'$, either both non-simple or  intersect each other: Proposition~\ref{prop:bound non-simple pair}   allows us to bound the expected value of the sum over all such pairs  in terms of the expected value of the  number  $Y'_{g,2}$ of such pairs of geodesics of length at most $L$, which was bounded as $O_L(1/g)$ in \cite[Proposition 4.5]{MP}, and so we obtain
\begin{equation}\label{Sum of intersecting pairs}
\begin{split}
\EWp\left( \sum_{\substack{\gamma\neq \gamma'\\ \gamma\cap \gamma' \neq \emptyset \\ {\rm or\; both\; non-simple}  }}  
| H_L(\ell_{\gamma})H_L(\ell_{\gamma'})| \right) 
&\ll_L  \EWp\left( \sum_{\substack{\gamma\neq \gamma'\\ \gamma\cap \gamma' \neq \emptyset   \\ {\rm or\; both\; non-simple}  }} 1 \right) 
\\
&= \EWp\left( Y'_{g,2} \right)\ll \frac 1g .
\end{split}
\end{equation}

We are left to treat  the case that one of the geodesics is simple, the other non-simple, but they do not intersect.
We bound this sum by the sum over all pairs $(\gamma,\gamma')$ where $\gamma$ is simple and $\gamma'$ is non-simple, which splits into a product of individual sums: The sum $A$ over simple $\gamma$ and  the sum $B$ over non-simple $\gamma'$:
\[
\sum_{\gamma \; \rm{simple}} |H_L(\ell_\gamma)| \sum_{\gamma'\;{\rm non-simple}} |H_L(\ell_{\gamma'})|=:A \cdot B .
\]
Using the bound on $ |H_L(\ell_{\gamma'})|\ll_L \mathbf 1_{[0,L]}(\ell_{\gamma'})$ for non-simple $\gamma'$ 
  of Proposition~\ref{prop:bound non-simple pair} (i) gives
\[
B\ll_L Y'
\]
where $Y'$ is the number of non-simple geodesics of length at most $L$. Using Cauchy-Schwarz, we find
\[
\EWp(A\cdot B) \ll_L \EWp(A\cdot Y') \leq \sqrt{\EWp(A^2)} \cdot \sqrt{\EWp((Y')^2)} .
\]

The second moment of $Y'$ was bounded in  \cite[Proposition 4.5]{MP} by
\[
\EWp((Y')^2)=\EWp((Y'(Y'-1)) + \EWp(Y') \ll\ \frac 1g.
\]
We claim that the second moment   $\EWp(A^2)$ is uniformly bounded: To see this, expand 
\[
A^2 = \sum_{\substack{\gamma,\gamma' \;{\rm simple}\\ {\rm non-intersecting} }} |H_L(\ell_{\gamma})H_L(\ell_{\gamma'})| + 
  \sum_{\substack{\gamma,\gamma' \;{\rm simple}\\ {\rm  intersecting} }} |H_L(\ell_{\gamma})H_L(\ell_{\gamma'})|   .
\]
The expected value of the sum over pairs of simple, non-intersecting pairs was already shown to be \underline{bounded} (Propositions \ref{prop:sns}, \ref{prop:separating pairs}). 
The expected value of the sum over intersecting pairs was shown in \eqref{Sum of intersecting pairs} to be $O_L(1/g)$. Therefore we obtain
\[
\EWp(A^2) \ll_{L,\tau} 1
\]
and so 
\[
\EWp(A\cdot B) \ll_L \EWp(A^2)^{1/2} \cdot \EWp((Y')^2)^{1/2} \ll_{L,\tau}   \frac 1{\sqrt{g}}
\]
proving Proposition~\ref{prop:expected sum nonsimple pairs}. \qed

\end{document}